\documentclass[12pt]{article}
	\usepackage{amsfonts}
	\usepackage{enumitem}
	\usepackage{mathtools}
	\usepackage{changepage}
	\usepackage{amsmath,amssymb}
	\usepackage{amsfonts}
	\usepackage{xcolor}
		\definecolor{pageColor}{rgb}{1,1,1}
		\definecolor{textColor}{rgb}{0,0,0}
		\pagecolor{pageColor} \color{textColor}
		\definecolor{sectionBox}{rgb}{0.6,0.4,0.8}
	\usepackage{relsize}
	\usepackage{xparse}
	\usepackage{titlesec}
	\usepackage{mathrsfs}
	\usepackage{romanbar}
	\usepackage{tikz-cd}
	\usepackage{framed}
	\usepackage{lipsum}
	\usepackage{graphicx}
	\usepackage{fancyhdr}
	\usepackage{subfig}
	\usepackage{booktabs}
	\usepackage{array}
	\usepackage{stackengine}
	\usepackage{wrapfig}
	\usepackage{scalerel}
	\usepackage{bm}
	\usepackage{stmaryrd}
	\usepackage{setspace}	
	\usepackage[b]{esvect}
	\usepackage{tikz}\usetikzlibrary{shapes.misc}
	\usepackage[parfill]{parskip}
	\usepackage[margin=1in]{geometry}
	\usepackage[framed,amsthm,amsmath,thmmarks,thref,hyperref]{ntheorem}
	\usepackage[hidelinks]{hyperref}
	\usepackage{eucal}
	\usepackage{colortbl}
	\usepackage{wasysym}
	\usepackage{pgfornament}
		\newcommand{\R}{\mathbb{R}}
		\newcommand{\Q}{\mathbb{Q}}
		
		\newcommand{\Z}{\mathbb{Z}}
		\newcommand{\N}{\mathbb{N}}

		\newcommand{\del}{\partial}

		\newcommand{\ST}{\mathcal{S_{T}}}
		\newcommand{\PT}{\mathcal{P_{T}}}
		\newcommand{\B}{\mathcal{B}}
		
		\newcommand{\LWT}{\mathcal{LW_{T}}}
		
		\newcommand{\ML}{\mathcal{ML}}
		\newcommand{\PML}{\mathcal{PML}}
		
		\newcommand{\T}{\mathcal{T}}
		\newcommand{\projsim}{\;\raisebox{-0.1cm}{$\stackrel{\mathrm{proj}}{\sim}$}\;}
		
		\DeclareMathOperator{\image}{image}

		\DeclareMathOperator{\dep}{dep}
		\DeclareMathOperator{\wt}{wt}
		\DeclareMathOperator{\cone}{cone^\circ}
		\DeclareMathOperator{\ccone}{cone}
		\DeclareMathOperator{\relint}{ri}
		\DeclareMathOperator{\aff}{aff}
		\DeclareMathOperator{\reldim}{reldim}
		
		\renewcommand{\emptyset}{\varnothing}
		\renewcommand{\le}{\leqslant}
		\renewcommand{\ge}{\geqslant}
		\renewcommand{\it}{\textit}

	\def\hmath$#1${\texorpdfstring{{\rmfamily\textit{#1}}}{#1}}

		\makeatletter
		\newlength\xvec@height%
		\newlength\xvec@depth%
		\newlength\xvec@width%
		\newcommand{\xvec}[2][]{%
		\ifmmode%
			\settoheight{\xvec@height}{$#2$}%
			\settodepth{\xvec@depth}{$#2$}%
			\settowidth{\xvec@width}{$#2$}%
		\else%
			\settoheight{\xvec@height}{#2}%
			\settodepth{\xvec@depth}{#2}%
			\settowidth{\xvec@width}{#2}%
		\fi%
		\def\xvec@arg{#1}%
		\def\xvec@dd{:}%
		\def\xvec@d{.}%
		\raisebox{.2ex}{\raisebox{\xvec@height}{\rlap{%
			\kern.05em
			\begin{tikzpicture}[scale=1]
				\pgfsetroundcap
				\draw (.05em,0)--(\xvec@width-.05em,0);
				\draw (\xvec@width-.05em,0)--(\xvec@width-.15em, .075em);
				\draw (\xvec@width-.05em,0)--(\xvec@width-.15em,-.075em);
				\ifx\xvec@arg\xvec@d%
				\fill(\xvec@width*.45,.5ex) circle (.5pt);%
				\else\ifx\xvec@arg\xvec@dd%
				\fill(\xvec@width*.30,.5ex) circle (.5pt);%
				\fill(\xvec@width*.65,.5ex) circle (.5pt);%
				\fi\fi%
			\end{tikzpicture}%
		}}}%
		#2%
		}
		\makeatother
		
		\renewcommand{\vec}[1]{\xvec[]{#1}}

		\makeatletter
		\let\save@mathaccent\mathaccent
		\newcommand*\if@single[3]{%
		\setbox0\hbox{${\mathaccent"0362{#1}}^H$}%
		\setbox2\hbox{${\mathaccent"0362{\kern0pt#1}}^H$}%
		\ifdim\ht0=\ht2 #3\else #2\fi
		}
		\newcommand*\rel@kern[1]{\kern#1\dimexpr\macc@kerna}
		\newcommand*\widebar[1]{\@ifnextchar^{{\wide@bar{#1}{0}}}{\wide@bar{#1}{1}}}
		\newcommand*\wide@bar[2]{\if@single{#1}{\wide@bar@{#1}{#2}{1}}{\wide@bar@{#1}{#2}{2}}}
		\newcommand*\wide@bar@[3]{%
		\begingroup
		\def\mathaccent##1##2{%
			\let\mathaccent\save@mathaccent
			\if#32 \let\macc@nucleus\first@char \fi
			\setbox\z@\hbox{$\macc@style{\macc@nucleus}_{}$}%
			\setbox\tw@\hbox{$\macc@style{\macc@nucleus}{}_{}$}%
			\dimen@\wd\tw@
			\advance\dimen@-\wd\z@
			\divide\dimen@ 3
			\@tempdima\wd\tw@
			\advance\@tempdima-\scriptspace
			\divide\@tempdima 10
			\advance\dimen@-\@tempdima
			\ifdim\dimen@>\z@ \dimen@0pt\fi
			\rel@kern{0.6}\kern-\dimen@
			\if#31
			\overline{\rel@kern{-0.6}\kern\dimen@\macc@nucleus\rel@kern{0.4}\kern\dimen@}%
			\advance\dimen@0.4\dimexpr\macc@kerna
			\let\final@kern#2%
			\ifdim\dimen@<\z@ \let\final@kern1\fi
			\if\final@kern1 \kern-\dimen@\fi
			\else
			\overline{\rel@kern{-0.6}\kern\dimen@#1}%
			\fi
		}%
		\macc@depth\@ne
		\let\math@bgroup\@empty \let\math@egroup\macc@set@skewchar
		\mathsurround\z@ \frozen@everymath{\mathgroup\macc@group\relax}%
		\macc@set@skewchar\relax
		\let\mathaccentV\macc@nested@a
		\if#31
			\macc@nested@a\relax111{#1}%
		\else
			\def\gobble@till@marker##1\endmarker{}%
			\futurelet\first@char\gobble@till@marker#1\endmarker
			\ifcat\noexpand\first@char A\else
			\def\first@char{}%
			\fi
			\macc@nested@a\relax111{\first@char}%
		\fi
		\endgroup
		}
		\makeatother
		\let\orisubsectionmark\subsectionmark
		\renewcommand\subsectionmark[1]{\label{subsec:#1}\orisubsectionmark{#1}}
	\newcommand{\pre}{\theoremprework{%
	\def\FrameCommand{{\color{sectionBox}{\vrule width 2pt\hspace{6pt}}}}}}
		\theoremframepreskip{0.3 cm}
		\theoremframepostskip{0.2 cm}
		\theoreminframepreskip{0.08 cm}
		\theoreminframepostskip{0.04 cm}

		\pre\newframedtheorem{thm}{Theorem}[section]
		\pre\newframedtheorem{claim}[thm]{Claim}
		\pre\newframedtheorem{lemma}[thm]{Lemma}
		\pre\newframedtheorem{corollary}[thm]{Corollary}

		\theoremstyle{nonumberdefinition}
		\pre\newframedtheorem{definition}{Definition.}
		\theoremprework{%
			\def\FrameCommand{{\color{sectionBox}{\vrule width 2pt\hspace{6pt}}}}
			}
			\theoremstyle{empty}
		\newframedtheorem{abstractNew}{}
		\theoremprework{%
			\def\FrameCommand{{\color{sectionBox}{\vrule width 2pt\hspace{6pt}}}}
			}
			\theoremstyle{empty}
		\newframedtheorem{reference}{}
	\title{\vspace{-3 cm}Growth Rate of Essential Surfaces\vspace{-0.2 cm}\\via Measured Laminations\vspace{-0.5 cm} \\\rule{0.3\textwidth}{.4pt}
	\vspace{-0.7 cm}} 

	\author{\small Brevan Ellefsen}
	\date{}
		\hyphenchar\font=-1
	\newcommand\titlebar{%
		\tikz[baseline,trim left=-0.1 cm - 2em,trim right = 0cm] {
			\fill [sectionBox!30!pageColor!100,rounded corners =0.25cm] (-0.1 cm - 2em,-1ex) rectangle (\textwidth - 0.1cm - 2em,2.5ex);
			\node [
				anchor= base east,
				minimum height=3.5ex] at (0cm,0) {
				\textbf{\S\thesection}
			};
		}%
	}

	\titleformat{\section}[block]{\Large\bfseries}{\titlebar}{0.1 cm}{}
	\renewcommand*{\thesection}{\arabic{section}}

	\newcolumntype{C}[1]{>{\centering\arraybackslash}p{#1}}
	\newlength{\tableWidth}
	\newcommand{\centered}[1]{\begin{tabular}{C{\the\tableWidth-25pt}}  #1 \vspace{0.1cm}\end{tabular}}
	\newcolumntype{?}{!{\vrule width 1pt}}

	\titleformat{\subsection}[runin]{\normalfont\bfseries}{\thesubsection}{1 ex}{\addperiod}
	\titleformat{\subsection}[runin]
	  {\normalfont\bfseries}
	  {\thesubsection}
	  {1ex}
	  {\addperiod}
	\newcommand{\addperiod}[1]{#1.}
	\titlespacing*{\section}{0pt}{4ex}{2.5ex}
	\graphicspath{ {./images/} }
    \hypersetup{nolinks=true}
	\hypersetup{final}
\begin{document}
\maketitle
\thispagestyle{empty}
\begin{abstractNew}[Abstract.]
We show that in any Haken 3-manifold $M$, the dimensions of $\ML(M)$ and $\ML_0(M)$ can be calculated using normal surfaces. As a corollary, we prove the number of closed orientable essential surfaces of Euler characteristic at least $-2n$ in a closed orientable hyperbolic 3-manifold with $H_2(M,\Z/2\Z) = 0$ is asymptotic to $n^{\dim \ML_0(M)}$.
\end{abstractNew}

\tableofcontents

\section{Introduction}
The \it{measured lamination space} of a surface $S$, also written $\ML(S)$, is extremely useful in the theory of surfaces, being equivalent to the Teichmuller space of $S$. Motivated by this, one wishes to see what holds for $3$-manifolds. In this paper we shall adhere to the definition of $\ML$ as put forth by Oertel in \cite{oertelMeasuredLaminations3Manifolds1988}:

\begin{definition}[$\ML$]
	The \it{measured lamination space} of a 3-manifold $M$ is (as a set) the collection of all measured incompressible prelaminations carried by TIBs, up to equivalence by pinching, splitting, and isotopy.
\end{definition}

Oertel then topologized $\ML$ by first projectivizing to get $\PML$, then finding a bijection from $\PML$ into the space of length functions, then pulling back the induced topology. In this paper, we will use this topology on $\ML$. Importantly, Oertel shows this topology agrees with the one induced locally from cones over branched surfaces (each with their induced topology from $\R^n$).

Understanding the structure and construction of $\ML$ remains a significant open problem. Nevertheless, this does allow one to define the dimension of $\ML$ in terms of branched surfaces. Since the components of $\ML$ contain the images of the (open) cones of weights over certain branched surfaces $\{\B_i\}$ and are contained within their closures (see \cite{oertelMeasuredLaminations3Manifolds1988}), we can define $\dim \ML$ via $\dim \ML \coloneqq \max_i \dim \cone(\B_i)$.

Although Hatcher and Oertel were able to formalize a great deal about $\ML$, the difficulty in its construction and lack of apparent applications resulted in the space remaining dormant for many years. Interest was recently renewed by \cite{dunfieldCountingEssentialSurfaces2022}, in which the number of essential surfaces in certain $3$-manifolds were shown to have polynomial growth. In that paper, the following conjecture relating to the power of the growth-rate polynomial was proposed, where $\ML_0$ denotes the subset of $\ML$ disjoint from $\del M$, $C$ is a face of the projective normal complex associated to a fixed triangulation of $M$, and $W_C$ is a vector space associated to $C$:
{\hyphenpenalty=10000
\begin{reference}{(Conjecture 1.12 of \cite{dunfieldCountingEssentialSurfaces2022})}
Let $M$ be a compact orientable irreducible $\del$-irreducible atoroidal acylindrical 3-manifold that does not contain a closed nonorientable essential surface. The dimension of $\ML_0$ is the maximum of $\dim C - \dim W_C + 1$ among essential lw-faces $C$.
\end{reference}
}
In this paper we make progress toward understanding the measured lamination space by resolving this conjecture via the following, more general result:
{\hyphenpenalty=10000
\begin{reference}[Theorem \ref{thm:mainResult}.]
Let $M$ be a compact orientable irreducible $\del$-irreducible 3-manifold. Then:
	{
	\begin{enumerate}[itemsep=0cm,leftmargin=2em,font=\upshape]
		\item If $\tau$ is a material triangulation of $M$, then $\dim( \ML)$ is the \\maximum of $\dim C - \dim W_C + 1$ among complete lw-faces $C$.
		\item If $\tau$ is an ideal triangulation of $M$, then $\dim( \ML_0)$ is the \\maximum of $\dim C - \dim W_C + 1$ among essential lw-faces $C$.
	\end{enumerate}}
\end{reference}
}

These conditions on $M$ are natural; indeed, in the case $\ML$ is nontrivial the conditions are essentially just that $M$ is Haken (under the convention that such spaces are assumed to be $\del$-irreducible): indeed, if $M$ admits an essential measured lamination then it admits a rational one by density and thus an integral one by scaling, which can be identified with an essential surface. Doubling weights as necessary ensures this surface is orientable.

As shown in \cite{dunfieldCountingEssentialSurfaces2022}, the resolution of their conjecture yields the asymptotic growth rate on the count of essential surfaces of Euler characteristic at least some bound. More precisely, we get the following:

\begin{corollary}
	Let $M$ be a compact orientable irreducible $\del$-irreducible atoroidal acylindrical 3-manifold that does not contain a closed nonorientable essential surface. The number of closed orientable essential surfaces in $M$ with Euler characteristic at least $-2n$ is asymptotic to $c\, n^{\ML_0(M)}$ for some constant $c$.
\end{corollary}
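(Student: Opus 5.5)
This follows by combining part 2 of Theorem \ref{thm:mainResult} with the counting theorem of \cite{dunfieldCountingEssentialSurfaces2022}. I would argue in three steps.

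\textbf{Step 1: the hypotheses of the counting theorem hold.} Fix an ideal triangulation $\tau$ of $M$; if $M$ is closed, use a material triangulation and the closed-case version of their setup. The hypotheses of the Corollary are exactly those of Conjecture 1.12 of \cite{dunfieldCountingEssentialSurfaces2022}:
\begin{itemize}
\item compact, orientable, irreducible, $\del$-irreducible, atoroidal and acylindrical;
\item no closed nonorientable essential surface.
\end{itemize}
So the main counting theorem of that paper applies. It says the number of closed orientable essential surfaces with $\chi \ge -2n$ is asymptotic to $c\, n^{d}$, where
\[
d = \max \{\dim C - \dim W_C + 1 : C \text{ an essential lw-face}\}.
\]
Their argument runs as follows.
\begin{itemize}
\item Acylindricity and atoroidality give finitely many isotopy classes in each lw-face.
\item The absence of nonorientable essential surfaces makes every lattice point in the cone over an lw-face correspond to a unique orientable surface up to isotopy.
\item The count is then an Ehrhart-type quasi-polynomial in $n$, whose degree is the dimension of the cone modulo the isotopy-equivalence subspace $W_C$.
\end{itemize}
I would state that result, cited from \cite{dunfieldCountingEssentialSurfaces2022}, without reproving it.

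\textbf{Step 2: identify the exponent.} Part 2 of Theorem \ref{thm:mainResult} applies, since its hypotheses (compact, orientable, irreducible, $\del$-irreducible) are weaker than those of the Corollary. It gives $d = \dim \ML_0(M)$. Substituting this into the asymptotic yields the claim.

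\textbf{Step 3: the leading constant, and the main obstacle.} The delicate point is that the asymptotic must be a genuine $c\,n^{d}$ with $c>0$, rather than a quasi-polynomial with oscillating leading coefficient. Here I would again rely on \cite{dunfieldCountingEssentialSurfaces2022}, which shows the leading term of the counting quasi-polynomial is constant. The reason is that the relevant sums over the lw-faces of top dimension have leading coefficient given by a volume. That volume is positive because some face attains the maximum and its projected cone has nonempty interior.

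If the closed case requires it, I would also check that the triangulation convention (ideal versus material) matches the one used in \cite{dunfieldCountingEssentialSurfaces2022}. For closed $M$ we have $\ML_0 = \ML$, and in that case part 1 of the theorem covers it instead.
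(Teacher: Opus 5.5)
Your proposal is correct and matches the paper, which likewise obtains the corollary by substituting part 2 of Theorem \ref{thm:mainResult} into the counting result of \cite{dunfieldCountingEssentialSurfaces2022}. That result supplies the asymptotic $c\,n^{d}$ with $d$ the maximum of $\dim C - \dim W_C + 1$ over essential lw-faces, and is taken as given, so your Step 3 is likewise just a citation. Two small points: for closed $M$, your appeal to part 1 also needs that every lw-face is essential when $\del M = \emptyset$ and that relative dimension is maximized on a complete face, so that the two maxima agree; and in your paraphrase of their argument, atoroidality and acylindricity give finitely many isotopy classes of each Euler characteristic, not finitely many per lw-face.
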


\section*{Acknowledgements}
We thank Lara Lahey and Saaber Pourmotabbed for their feedback. We especially thank Nathan Dunfield for the many suggestions he provided. Ellefsen was partially supported by U.S. National Science Foundation grant DMS 2303572. This material is based upon work supported by the National Science Foundation under Grant No. DMS-1928930, while the author was in residence at the Simons Laufer Mathematical Sciences Institute in Berkeley, California, during Spring semester of 2026.
\pagebreak
\section{Background and Conventions}

We begin by recalling results we will need from various papers, and develop necessary background to understand the constructions to follow. Throughout the following, $M$ will denote a compact, irreducible, $\del$-irreducible, and orientable $3$-manifold, possibly without boundary, with a fixed triangulation $\T$. \it{(By `triangulation', we mean collection of tetrahedra glued pairwise along their faces.)} All surfaces (barring compression disks) will be properly embedded.

We begin by fixing some terminology. A surface is \it{incompressible} when it admits no compression disks and is neither a sphere nor disk, and a surface is \it{$\del$-incompressible} when it admits no $\del$-compression disks and is not itself a disk. A surface is \it{injective} (resp. \it{$\del$-injective}) iff a regular neighborhood of it is incompressible (resp. $\del$-incompressible). Note an orientable surface is injective iff it is incompressible, and is likewise $\del$-injective iff it is $\del$-incompressible. An \it{essential surface} is an orientable surface whose components are each injective, $\del$-injective, and not $\del$-parallel. The union of $n$ disjoint copies of an orientable surface $F$ is called a \it{multiple} of $F$ and is denoted by $nF$. For non-orientable surfaces, $nF$ is defined as either $\tfrac{n}{2}G$ or $F \cup \tfrac{n-1}{2} G$ depending on the parity of $n$, where $G$ is the boundary of a regular neighborhood of $F$. We say two orientable surfaces are \it{projectively isotopic} if some multiples of the surfaces are isotopic. We will denote isotopy by $\sim$ and projective isotopy by \!$\projsim$.

We will denote the relative interior and relative boundary of a convex set $X$ by $\relint X$ and $\del X$ respectively.

\subsection{Branched Surfaces}

The first major tool is that of a branched surface, which is to a $3$-manifold what a train track is to a $2$-manifold. Accessible treatments can be found in \cite{floydIncompressibleSurfacesBranched1984,oertelIncompressibleBranchedSurfaces1984,gabaiEssentialLaminations3Manifolds1989}, though we will here recount the necessary basics. A branched surface is modeled locally as a subset of Figure \ref{fig:branchedSurface}. The dotted lines signify the singular locus, which is $1$-dimensional except at triple points. At each point on a branched surface is a well-defined tangent space.

\begin{wrapfigure}{R}{0.45\textwidth}
  \begin{center}
    \includegraphics[width=0.4\textwidth]{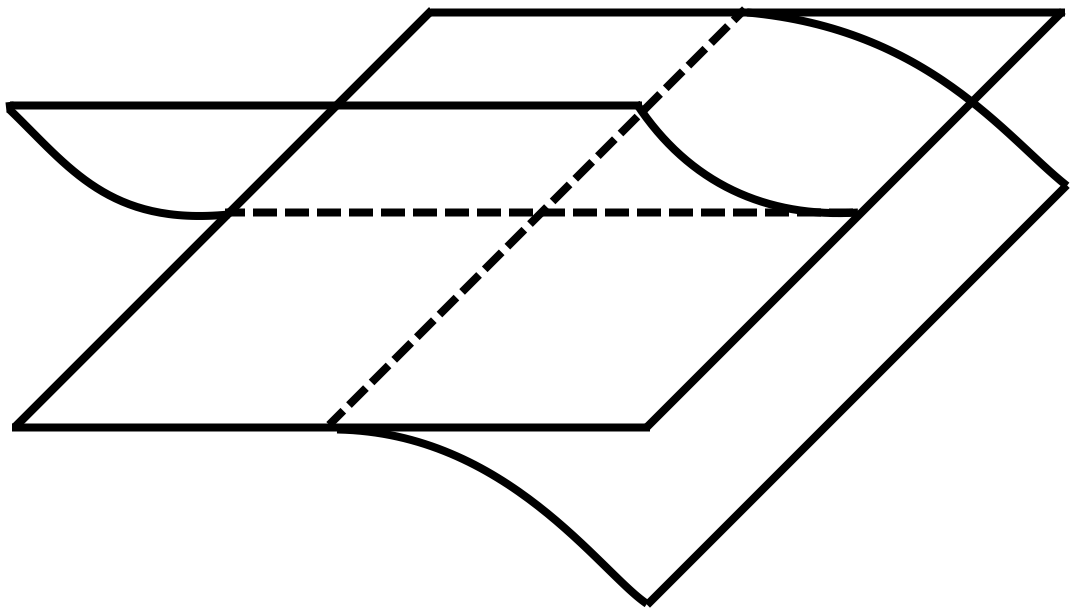}
  \end{center}
  \caption{}
  \label{fig:branchedSurface}
\end{wrapfigure}

Given a branched surface $B$, we can thicken each sheet (as well as the singular locus) of $B$ vertically to get a fibered neighborhood $N = N(B)$. The \it{sectors} of $B$ are the closures of the components of $B$ minus its singular locus. There is a clear projection map $\pi$ from $N$ to $B$ given by collapsing fibers of $N$ (so that $\pi(N) = B)$.  The \it{horizontal boundary} $\del_h N$ is the union of the endpoints of the I-fibers. The \it{vertical boundary} $\del_v N$ is the closure of the complement of $\del M \cup \del_h N$ in $\del N$; geometrically, it is the portion of the interiors of the I-fibers intersecting $\del N$. To each sector $B^i$ of $B$ we may assign a weight $w_i \ge 0$. We say an associated vector $w = (w_i) \in \R^s$ (where $s$ is the number of sectors) is an \it{invariant measure} or \it{weight system} provided the weights satisfy the branch equations $w_1 = w_2 + w_3$ wherever two sheets merge into one. Given a weight system on $B$ with integer coordinates, we can construct an associated (non-branched) surface $B(w)$ \it{carried} by $B$ by gluing $w_i$ many parallel copies of each sector $B^i$ in the natural way. We can always scale a rational solution to be integral, so we will often identify rational points with their surface lifts.

Of special importance to us are \it{incompressible} branched surfaces. We say a branched surface $B$ is incompressible given the following:
\begin{enumerate}[leftmargin=1.5cm]
	\item $B$ has no disks nor half-disks of contact.
	\item $N_h$ is incompressible and $\del$-incompressible in $M - N^\circ$.
	\item There are no monogons in $M - N^\circ$.
\end{enumerate}

Recall a surface $F$ is called injective (resp.~$\del$-injective) iff a regular neighborhood of $F$ is incompressible (resp.~$\del$-incompressible). Incompressible branched surfaces carry only injective and $\del$-injective surfaces, and the converse also turns out to be true. Indeed, in Theorem 3 of \cite{oertelIncompressibleBranchedSurfaces1984} it is shown that there exists a finite collection of incompressible branched surfaces in $M$ such that every orientable, incompressible, and $\del$-incompressible surface in $M$ is carried with positive weights by a branched surface of the collection.

A branched surface carrying some surface with strictly positive weights is called \it{recurrent}. If $B(w)$ with $w_i > 0$ is isotopic to $B(w')$ with $w' \ge 0$ iff $w = w'$, we say $B$ has no \it{isotopy relations}. A closed curve transverse to $B$ is called a \it{transversal}. A curve or arc $\gamma$ which is transverse to $B$ is said to be \it{efficient}  if we cannot simplify its intersection with $N$ in the following sense: no arc of $\pi^{-1}(\gamma)$ can be pushed into $\partial_h N$ while keeping endpoints fixed; formally, we require no arc of $\pi^{-1}(\gamma) - N^\circ$ to be homotopic in $M - N^\circ$ rel endpoints to an arc in $\partial_h N$. A branched surface is \it{transversely recurrent} if every point of $B$ admits an efficient transversal.

We say a branched surface is \it{birecurrent} if it is both recurrent and transversely recurrent. A branched surface which is both birecurrent and incompressible is called a \it{TIB}. A TIB carries no $\del$-parallel surface: indeed, an efficient transversal for the TIB would yield an efficient transversal for the $\del$-parallel surface, which can not occur. The most important classical result concerning TIBs is the following:

\begin{reference}[Theorem 4.1 of \cite{oertelMeasuredLaminations3Manifolds1988}.]
	Given $M$ orientable, irreducible, and $\del$-irreducible, there is a finite collection of TIBs without isotopy relations such that every orientable essential surface in $M$ is carried with positive weights by a branched surface of the collection.
\end{reference}

We would be remiss to not mention the unfinished notes \cite{hatcherMeasuredLaminationSpaces1999} of Hatcher. In them, Hatcher proposed a slightly different definition of $\ML$ endowing a PL-structure via charts, but potentially with some frontier strata missing. In particular, this means $\ML$ may be incomplete, and correspondingly its projectivization $\PML$ may be non-compact. It is generally understood that Hatcher's construction is equivalent to Oertel's, though we stick with the latter in these notes. Note that in both definitions, every essential surface can be realized in multiple ways in $\ML$ via thickening, so isolated compact leaves are excluded to prevent ambiguity. However, our arguments in this paper will mostly work purely at the level of cones, where we can think of integral (and therefore rational) points as surfaces and work with them as such; as such, we do not concern ourselves with prelaminations here.

\subsection{Normal Surfaces}

The idea of a normal surface is to chop a given surface into pieces so each $3$-simplex $\Delta \in \T$ sees only triangular or quadrilateral pieces of the surface. To be precise, we first define a \it{normal arc} to be a properly embedded arc on a $2$-face $[f]$ of $\Delta$ whose endpoints lie on distinct edges of $[f]$. A \it{normal curve} is then a simple closed curve whose intersection with any $2$-face $[f]$ of $\Delta$ is a union of normal arcs. Since $\Delta$ inherits a geometry from $M$, it suffices to assume all normal arcs are straight line segments. It is easy to prove a normal curve which meets each edge of $\Delta$ at most once has length $3$ or $4$.

We then define {normal tris} and {normal quads} to be properly embedded disks in $\Delta$ whose boundaries are normal curves of length $3$ and $4$ respectively. A \it{normal disk} is a normal tri or normal quad. Separating normal disks into equivalence classes based on how they separate the vertices of $\Delta$, it is easy to see there are four types of tris and three types of quads. A \it{normal surface} is a surface $F$ in $M$ whose intersection with any $\Delta$ is comprised of disjoint normal disks. A \it{normal isotopy} is an isotopy through normal surfaces. Equivalently, a normal isotopy is an isotopy which preserves the normal disk types and counts in each $\Delta$. As was shown by Haken, every incompressible surface is isotopic to a normal surface. For a proof and more details, see \cite{schultensIntroduction3manifolds2014}.

\subsection{Normal Solution Spaces}

One might hope to present a normal surface by specifying the number of instances of each disk type in each $3$-simplex. Since there are exactly $7$ normal disk types, this would be equivalent to specifying a vector in $\N^{7t}$, where $t = |\T|$ (note a disk type might not appear, in which case its coordinate is zero). This idea works, but is rigid and does not tell us which lattice points correspond to normal surfaces. To allow flexibility we extend to $\R^{7t}$, and to specify which lattice points are normal surfaces we invoke the \it{matching equations}  $x_i + x_j = x_k + x_l$, which specify how normal disks in adjacent tetrahedra meet along each possible arc type.

We then define the \it{normal solution space} $\ST$ to be the subspace of $\R^{7t}$ satisfying the matching equations. One can prove that each normal surface $F$ in $M$ embeds into this space as $\vec F \in \N^{7t}$; in fact, the normal surfaces are precisely the lattice points in $\ST$ which are \it{admissible}, meaning there is at most one nonzero quad coordinate in each tetrahedron. Note $\ST$ is a finite-sided polyhedral cone. Contained within $\ST$, one can define various notions of a \it{projective solution space} by normalizing according to some surface invariant. For our purposes here, it will be convenient to normalize by \it{weight}. For a normal surface $F$, this is given by the number of intersections of $F$ with the $1$-skeleton of $\T$. For a general $x \in \R^{7t}$, this is extended linearly. A preliminary definition would be to merely sum over the components of $x$, adding $1$ for each vertex of the associated elementary disk (so tris would contribute $3$, and quads $4$). This would overcount though, since a normal surface (without boundary) which intersects an edge must also intersect all tetrahedrons adjacent to said edge, resulting in overcounting the intersection precisely by the valency of the edge. To fix this, given an elementary disk $E_i$, we define $c_i$ to be the sum of the reciprocals of the valencies of the edges of $\T$ which the vertices of $E_i$ lie upon. We then define $\wt(x) = \sum_i c_i x_i$, yielding $\wt(F) = \wt(\vec F)$ for every normal surface $F$.

We now define the projective solution space $\PT$ to be the normalization of $\ST$ by weight, i.e. $\PT = \{x \in \ST \mid \wt(x) = 1\}$.  We say a surface $F$ is \it{carried} by a subset $X$ of $\PT$ if the projectivization $F^* \coloneqq \tfrac{1}{\wt{F}}\vec F$ is in $X$ (likewise, $X$ is then said to \it{carry} $F$). A face $C$ of $\PT$ is said to be the \it{carrier} of $F$ if $F$ is carried by $C^\circ$. Letting the interior of a vertex be itself, every normal surface is carried by the interior of some unique face of $\PT$.

Given a set $A \subseteq \R^{7t}$, we define the result of \it{collapsing isotopy relations} to be the quotient of $A$ by the subspace generated by lines between admissible lattice points in $A$ representing isotopic normal surfaces. 

\subsection{Least Weight Surfaces}

Since isotopy classes are a primary object throughout the discussions here, it makes more sense to consider canonical representatives of the classes. We achieve this via \it{least weight surfaces}, often written as \it{lw-surfaces}. These are defined to be compact orientable incompressible $\del$-incompressible normal surfaces that are least weight among all normal surfaces in their isotopy classes. The relevance of orientability here is to ensure the double of an lw-surface remains least weight in its isotopy class; this is not always true for non-orientable surfaces of least weight. Also note that a given surface may have finitely many distinct lw-surfaces in its isotopy class.

We define an \it{lw-face} to be a face of $\PT$ such that every surface carried by it is an lw-surface. We further define a \it{complete face} to be an lw-face which, whenever it carries an orientable normal surface $F$, carries every lw-surface isotopic to $F$.

The collection of all lw-faces forms a complex $\LWT$. An important result of \cite{tollefsonIsotopyClassesIncompressible1995} is the following:

\begin{reference}[(\cite{tollefsonIsotopyClassesIncompressible1995}, Theorem 4.5)]
	Every lw-surface is carried by a complete lw-face. In particular, any lw-face is contained in some complete lw-face. 
\end{reference}

As in \cite{oertelIncompressibleBranchedSurfaces1984, tollefsonIsotopyClassesIncompressible1995}, given any lw-face $C$ serving as the carrier of an lw-surface $S$, we can build a branched surface $\tilde B_S$ by flattening and identifying elementary disks in $2S$ (the boundary of a regular neighborhood of $S$; we do not actually need $S$ to be orientable for this to work). In the process we may create `thin sectors', which can be made to lie within a small regular neighborhood of $\T^2$. The branched surface $\tilde B_S$ may not be incompressible, so we remove disks of contact to get an incompressible branched surface $B_S$. While the construction of $\tilde B_S$ means the surfaces it carries are precisely the normal surfaces carried by $C$, the same is not necessarily true of $B$. While everything carried by $B$ is still carried by $C$, the converse is not generally true: the compression process can result in $B$ carrying strictly fewer surfaces than $\tilde B$ (up to normal isotopy). However, we \it{do} get a converse up to (potentially non-normal) isotopy, given as Lemma 3.1 in Tollefson's paper:

\begin{reference}[(\cite{tollefsonIsotopyClassesIncompressible1995})]
	Every normal surface carried by $C$ is isotopic to a surface carried by $B$.
\end{reference}

We will later make use of this fact to relate $\ccone(C)$ and $\ccone(B)$.

\subsection{PIC-Partitions}

In \cite{tollefsonIsotopyClassesIncompressible1995}, Tollefson defined the concept of a \it{PIC-partition} (PIC standing for ``Projective Isotopy Class"). The PIC-partition of a face $C$ of $\LWT$ is a foliation of $C^\circ$ along affine subspaces such that two surfaces carried by $C$ are projectively isotopic iff their projectivizations lie along the same leaf of the foliation. As detailed in \cite{dunfieldCountingEssentialSurfaces2022}, by choosing a different projective realization of $\PT$ than Tollefson used (normalizing by weight rather than $L^1$ norm), the PIC-partition will be chosen to be along \it{parallel} affine subspaces. 

In short, the idea is as follows: first, consider a normal surface $F$ with carrier $C$. Let $V_F \subseteq \R^{7t}$ be the space generated by all $\vec G$ with $G \projsim F$, which is finite dimensional since $\R^{7t}$ is. We projectivize $V_F$ by weight to get $X_F$, which is equivalent to intersecting $V_F$ with $\PT$. $X_F$ is affine, being the intersection of affine spaces, so has a well-defined tangent (vector) space $TX_F$ given by the subtraction of elements in $X_F$. One can show this choice is independent of the surface $F$ chosen, and we let $W_C \coloneqq TX_F$. In general, the tangent space functor taking a space to its tangent space at the origin will be denoted by $T$ throughout this paper, and is explicitly given by $TX \coloneqq \{x_1 - x_2 \mid x_1, x_2 \in X\}$.

This refined notion of the PIC-partition of a face $C$, with leaves being translates of a vector space the authors labeled $W_C$, will be the convention followed here. It is important to emphasize that $W_C$ is a specific vector space and not the foliation itself; however, because the foliation is via parallel copies of $W_C$, the foliation will often be identified with $W_C$. 

One important aspect of $W_C$ is how it interacts with $\dep(C)$. We say a face $D$ of an lw-face $C$ is \it{$C$-independent} if no normal surface carried by $D$ is projectively isotopic to a normal surface carried by $C^\circ$. We then define $\dep(C)$ as the complement in $C$ of the set of $C$-independent faces. Note $\dep(C)$ is likewise the union of the interiors of the $C$-dependent faces (note the interiors are all disjoint, so this is a partition). As proven in \cite{dunfieldCountingEssentialSurfaces2022}, $\dep(C) = \{ x \in C \mid x + W_C \text{ meets } C^\circ\}$, so that $\dep(C)$ is the largest subset of $C$ we get by `extending' the affine pieces in the foliation of $C^\circ$ by $W_C$ in a nonsingular way to the boundary. These lines then represent projective isotopy classes in the general case, and isotopy classes in the orientable case. For more precise statements, see \cite{dunfieldCountingEssentialSurfaces2022}.

\subsection{Cones}

The concept of cones will be quite important throughout this paper, so we briefly mention it. Given a face $C$ of $\PT$, we form the cone over $C$ in $\R^{7t}$, which we denote $\ccone(C)$. We may likewise consider the interior of this cone, which we denote $\cone(C)$ (note the origin is not included). Note a surface $F$ is carried by $C$ iff $\vec F \in \ccone(C)$, and has carrier $C$ iff $\vec F \in \cone(C)$; in other words, the lattice points in the cone over $C$ exactly represent surfaces carried by $C$.

We get an analogous story in the case of branched surfaces. Given a branched surface $B$, we can consider the set all of non-negative integer weights systems in $\R^s$ over $B$ (meaning each weight is non-negative), which we denote $\ccone(B)$. We may likewise consider the set all of \it{positive} integer weights systems in $\R^s$ over $B$ (meaning each weight is positive), which we denote $\cone(B)$.

In general, given a closed subset $A$ of $\R^n \setminus \{0\}$, the \it{cone} through $A$ will be defined as $\R_{\ge 0} A$, i.e. $\{ra \mid r \in \mathbb \R_{\ge 0}, a \in A\}$. This will be denoted by $\ccone(A)$. We also often consider the interior of the cone, given by $\cone(A) \coloneqq \R_{>0} A^\circ$.

We now recall the construction of the map $\rho$ from \cite{tollefsonIsotopyClassesIncompressible1995}. Letting $\B = \B_S$, we will denote by $\B^i$ the $i$th sector of $\B$, and will let $\xvec{\B^i}$ denote the normal coordinate representation of $\B^i$ coming from the normal disks used to build the sector (if $\B^i$ is a thin sector and thus has no associated normal disks, we define $\B^i$ as the zero vector). For any sector $\B^i$ (thin or otherwise), we associate it with the weight vector $(0, \ldots, 1, \ldots, 0)$ (with a $1$ in the $i$th position and zeroes elsewhere). This allows us to write any weight vector $w = (w_i)$ as $\sum_i w_i \B^i$, so that $\rho \colon \R^s \to \R^{7t}$ is simply the linear map $\rho\left ( \sum_i w_i \B^i\right) = \sum_i w_i \rho(\B_i) = \sum_i w_i \vec \B_i$ sending the set $\{\B^i\}$ to the set $\{\vec{\B^i}\}$.

Note that the definition of $\rho$ immediately yields $\rho(\vec f) = \vec F$; indeed, the weights on thin sectors are uniquely determined by the non-thin sectors (the `\it{thick}' sectors), and forgetting the thin sectors yields (up to normal isotopy) a union of sectors formed from normal disks which were originally glued to make $S$.

\section{Minimal Completions}

We will say a face $B$ is \it{compatible} with $C$ if $W_B = W_C \cap TB$. We will sometimes instead say $B$ is \it{$C$-compatible} or \it{satisfies the compatibility condition}. Note it is always true that $W_B \subseteq W_C \cap TB$; compatible faces are special in that we get equality, meaning their foliations have maximal possible dimension. If $B$ is $C$-independent then $B$ is $C$-compatible:

\begin{lemma}
Let $C$ be a lw-face and $B$ be a $C$-dependent face. Then:
	\begin{enumerate}[itemsep=0cm]
		\item $\dep(B) \subseteq \dep(C)$.
		\item  $B$ is compatible with $C$.
	\end{enumerate}
\end{lemma}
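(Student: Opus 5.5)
The plan is to work entirely with the characterization $\dep(C) = \{x \in C \mid x + W_C \text{ meets } C^\circ\}$ from \cite{dunfieldCountingEssentialSurfaces2022}, together with the inclusion $W_B \subseteq W_C \cap TB$ noted just before the lemma. Throughout, $B$ is a face of $C$ that is $C$-dependent. Since $\dep(C)$ is the union of the interiors of the $C$-dependent faces, this gives $B^\circ \subseteq \dep(C)$.

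For (1), I will take $x \in \dep(B)$. Applying the same characterization to the lw-face $B$, the affine space $x + W_B$ meets $B^\circ$, say at a point $y$. Then $y - x \in W_B \subseteq W_C$, so $x + W_C = y + W_C$. Since $y \in B^\circ \subseteq \dep(C)$, the leaf $y + W_C$ meets $C^\circ$. Hence $x + W_C$ meets $C^\circ$, and since $x \in B \subseteq C$ we get $x \in \dep(C)$. This part should be purely formal.

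For (2), only the reverse inclusion $W_C \cap TB \subseteq W_B$ remains.
\begin{enumerate}
\item \emph{Reduce to rational vectors.} $W_C$ is spanned by differences of rational points (projectivized normal coordinates of surfaces), and $TB$ is cut out by rational linear equations. So $W_C \cap TB$ is a rational subspace, and its rational points are dense in it. Since $W_B$ is closed, it suffices to treat rational $v \in W_C \cap TB$.
\item \emph{Place two rational points on one leaf.} Fix a rational point $y \in B^\circ$; it carries some surface $F$ with carrier $B$. Because $y$ lies in the relative interior of $B$ and $v \in TB$, the point $y' = y + \varepsilon v$ also lies in $B^\circ$ for small rational $\varepsilon > 0$. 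It is rational, so it carries a surface $G$ with carrier $B$.
\item \emph{Show $F \projsim G$.} Both $y$ and $y'$ lie in $\dep(C)$ and on the same leaf $y + W_C$, which meets $C^\circ$. That leaf is a rational affine space, so it meets $C^\circ$ at a rational point $z$, which carries a surface $H$. I will then invoke the fact from \cite{dunfieldCountingEssentialSurfaces2022} that, within $\dep(C)$, the $W_C$-leaves extended from $C^\circ$ exactly record projective isotopy classes. This gives $F \projsim H \projsim G$.
\item \emph{Conclude.} By the definition of $W_B$ as $TX_F$ for a surface $F$ with carrier $B$, we get $\varepsilon v = y' - y \in W_B$, hence $v \in W_B$.
\end{enumerate}

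The main obstacle is step 3: that two rational points of $\dep(C)$ lying on a common $W_C$-leaf correspond to projectively isotopic surfaces, even though the points themselves sit on the boundary face $B$ rather than in $C^\circ$. I would first try to quote the precise statement from \cite{dunfieldCountingEssentialSurfaces2022} that these extended leaves are the projective isotopy classes. If that statement is only available on $C^\circ$, I would argue directly instead. The point $z \in C^\circ$ and a point of $B$ on the same leaf differ by an element of $W_C$, which is spanned by differences of projectively isotopic surfaces carried by $C^\circ$. I would then use Tollefson's description of isotopy within lw-faces to transport the isotopy to the boundary point.
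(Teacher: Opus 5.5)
Your proposal is correct and is essentially the paper's proof. Part (1) is the same leaf-chasing argument. In part (2) the paper also passes to rational points of $B^\circ$ differing by a vector of $W_C \cap TB$ and applies Theorem 3.5 of \cite{dunfieldCountingEssentialSurfaces2022} to get $F \projsim G$; that theorem is exactly the statement you need in step 3 and holds on all of $\dep(C)$, so neither the auxiliary point $z$ nor your fallback is needed. The paper then concludes $F^* - G^* \in W_B$ by the same theorem applied to $B$, where you use the definition of $W_B$ directly; your reduction to rational $v$ and choice of $y, y+\varepsilon v \in B^\circ$ also justify more cleanly the paper's brief remark that $b_1, b_2$ may be taken rational and interior.
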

\begin{proof}
	Let $x \in \dep(B)$, so there exists some $w \in W_B \subseteq W_C$ such that $x + w = b \in B^\circ$. Since $B^\circ \in \dep(C)$ (using that $B$ is $C$-dependent), there exists some $w' \in W_C$ such that $b + w' \in C^\circ$, so $x + (w + w') \in C^\circ$. Since $w + w' \in W_C$, we conclude $x \in \dep(C)$.

	For the second statement, we must prove $W_C \cap TB \subseteq W_B$. Let $w \in W_C \cap TB$. Since $w \in TB$, there exist $b_1, b_2 \in B$ such that $w = b_1 - b_2$. Since faces have rational vertices, we can assume both $b_1$ and $b_2$ are rational. Likewise, we can assume both are contained in the interior (if one is not, replace it by a rational on the line segment between them). By scaling, we find vectors $\vec F$ and $\vec G$ associated to surfaces $F$ and $G$ carried positively by $B$ such that $F^* = b_1$ and $G^* = b_2$. Since $F^* - G^* = b_1 - b_2 = w \in W_C$ with $F$ and $G$ carried by $B^\circ \subseteq \dep(C)$, we know by Theorem 3.5 of \cite{dunfieldCountingEssentialSurfaces2022} that $F \projsim G$. Turning this on its head, we can apply the same theorem with the observations $F^*, G^* \in B^\circ \subseteq \dep(B)$ and $F \projsim G$ to conclude $F^* - G^* \in W_B$.
\end{proof}
{\setlength{\parskip}{0.2em}
Recall from \cite{dunfieldCountingEssentialSurfaces2022} that every essential surface is carried by $\dep(C)$ for some unique, complete face $C$. Also note $\dep(C)$ is a union of open lw-faces, and every lw-face carries an lw-surface in its interior by density of rationals. Carrying a surface extends to carrying the whole open lw-face carrying $S$ positively, so we can define the \it{minimal completion} of a lw-face:

\begin{definition}[\it{minimal completion}]
	Given an lw-face $C$, we define the \it{minimal completion} $\widebar C$ as the unique complete lw-face $\widebar C$ such that $C^\circ \in \dep(\widebar C)$.
\end{definition}

Whenever the context is clear, we shall refer to $\widebar C$ simply as the \it{completion} of $C$. Another notion in this paper we will need is that of the \it{relative dimension} of a lw-face $C$, defined as $\reldim C \coloneqq \dim C - \dim W_C$. The motivation for such a definition comes from overcounting: directions in $W_C$ correspond to isotopy relations, so by subtracting the dimension of $W_C$ we remove overcounting in lattice-counting estimates regarding $C$. We now prove a foundational result on relative dimension:

\begin{lemma}
	The relative dimension of an lw-face $C$ is less than or equal to the relative dimension of its completion.
\end{lemma}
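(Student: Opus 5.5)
The plan is to reduce the statement to linear algebra using the compatibility lemma just proved. Write $\widebar C$ for the completion of $C$. By definition, $C^\circ \subseteq \dep(\widebar C)$, and $\dep(\widebar C)$ is the union of the interiors of the $\widebar C$-dependent faces of $\widebar C$. Since the interiors of distinct faces are disjoint, $C$ must be one of these faces. In particular $C$ is a face of $\widebar C$ and is $\widebar C$-dependent. The previous lemma (part 2) then says $C$ is compatible with $\widebar C$:
$$W_C = W_{\widebar C} \cap TC.$$

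Next I would check that both $TC$ and $W_{\widebar C}$ are subspaces of $T\widebar C$. For $TC$ this holds because $C$ is a face of $\widebar C$. For $W_{\widebar C}$, recall that it is the tangent space of a leaf $X_F \cap \PT$ of the PIC-partition of $\widebar C^\circ$. We may take $F$ with carrier $\widebar C$, so that the leaf through $F^*$ meets $\widebar C^\circ$ in a relatively open subset of an affine space of dimension $\dim W_{\widebar C}$. Differences of points in that open piece lie in $T\widebar C$ and span $W_{\widebar C}$. This is the only step needing some care, namely making sure the leaf is genuinely open in its affine span inside $\widebar C^\circ$. I would justify it from the description $\dep(\widebar C) = \{x \in \widebar C \mid x + W_{\widebar C} \text{ meets } \widebar C^\circ\}$: for $x \in \widebar C^\circ$, small vectors of $W_{\widebar C}$ keep $x$ in $\widebar C^\circ$ and hence in $\PT$.

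Finally, the dimension count. Inside $T\widebar C$ the inclusion $TC + W_{\widebar C} \subseteq T\widebar C$ gives
$$\dim TC + \dim W_{\widebar C} - \dim(TC \cap W_{\widebar C}) = \dim(TC + W_{\widebar C}) \le \dim T\widebar C.$$
Substitute $\dim(TC \cap W_{\widebar C}) = \dim W_C$ from compatibility, and use $\dim C = \dim TC$ and $\dim \widebar C = \dim T\widebar C$ for convex sets. Rearranging gives $\dim C - \dim W_C \le \dim \widebar C - \dim W_{\widebar C}$, which is exactly $\reldim C \le \reldim \widebar C$.
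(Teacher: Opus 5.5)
Your proposal is correct and is essentially the paper's proof: you get compatibility $W_C = W_{\widebar C}\cap TC$ from $C^\circ\subseteq\dep(\widebar C)$ via the preceding lemma, then apply inclusion--exclusion using $TC + W_{\widebar C}\subseteq T\widebar C$. One caveat: your justification of $W_{\widebar C}\subseteq T\widebar C$ is circular, because small vectors of $W_{\widebar C}$ stay in the relatively open set $\widebar{C}^\circ$ only if $W_{\widebar C}$ already lies in $T\widebar C$, and the $\dep$ description is vacuous for $x\in\widebar{C}^\circ$. Instead, cite the spanning property in Theorem 3.5 of \cite{dunfieldCountingEssentialSurfaces2022}, which expresses $W_{\widebar C}$ as the span of differences $F^*-F_i^*$ of points of $\widebar{C}^\circ$; the paper itself simply takes this containment for granted.
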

\begin{proof}
	We must show $\dim C - \dim W_C \le \dim \widebar C - \dim W_{\widebar C}$. Since $C^\circ \in \dep(\widebar C)$, we know $W_C = W_{\widebar C} \cap TC$, so $\dim W_C = \dim {W_{\widebar C}} + \dim {TC} - \dim(W_{\widebar C} + TC)$. Since $\dim TC = \dim C$, it suffices to prove $\dim(W_{\widebar C} + TC) \le \dim \widebar C$, which follows from $W_{\widebar C} + TC \subseteq T \widebar C$ since both $W_{\widebar C} \subseteq T\widebar C$ and $TC \subseteq T \widebar C$.
\end{proof}

\begin{corollary}
	Relative dimension is maximized on some complete lw-face.
\end{corollary}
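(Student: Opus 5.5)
The plan is to derive the corollary directly from the preceding lemma, using finiteness of the collection of lw-faces. The complex $\LWT$ is a subcomplex of $\PT$, and $\PT$ is the weight-normalization of the finite-sided polyhedral cone $\ST$, so $\PT$ has only finitely many faces. Hence there are finitely many lw-faces, and $\reldim$, which takes integer values, attains a maximum over them. Fix an lw-face $C$ realizing this maximum; if there are no lw-faces at all, the statement is vacuous.

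Next pass from $C$ to its minimal completion $\widebar C$. This requires $\widebar C$ to be well defined. The interior $C^\circ$ carries an lw-surface $S$, obtained by density of rationals and scaling. By the cited result of \cite{dunfieldCountingEssentialSurfaces2022}, $S$ is carried by $\dep(D)$ for a unique complete lw-face $D$. Because $\dep(D)$ is a union of open lw-faces and the open faces partition $\PT$, the whole of $C^\circ$ lies in $\dep(D)$, so $\widebar C = D$.

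Finally apply the preceding lemma: $\reldim C \le \reldim \widebar C$. Since $\widebar C$ is itself an lw-face and $C$ was chosen to maximize $\reldim$ over all lw-faces, we also have $\reldim \widebar C \le \reldim C$, so equality holds. Thus the maximum of relative dimension over all lw-faces is attained at the complete lw-face $\widebar C$. In particular, the maximum over complete lw-faces equals the maximum over all lw-faces.

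There is no real obstacle. The only points needing care are the finiteness of the face set, which guarantees the maximum exists, and the existence of the completion, which the definition of minimal completion already provides.
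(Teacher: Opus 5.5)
Your proposal is correct and follows the paper's route. The corollary is stated there as an immediate consequence of the preceding lemma $\reldim C \le \reldim \widebar C$, applied to an lw-face that maximizes relative dimension. The details you add, namely finiteness of the faces of $\PT$ (so the maximum exists) and the check that $C^\circ \subseteq \dep(\widebar C)$ (so the completion is well defined), are left implicit in the paper but are exactly what the argument needs.
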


Also note that if $\dim(W_{\widebar C} + TC) = \dim \widebar C$ then we get equality of relative dimensions. In general, given any affine space $W$, a face $B$ of $C$ is said to be \it{$C$-filling} (with respect to $W$) if $TC = TB + W$. When the context is clear, we will often just call such a face \it{filling}.

\begin{corollary} \label{cor:reldimMaxOnComplete}
	$\reldim C = \reldim \widebar C$ iff $C$ is filling.
\end{corollary}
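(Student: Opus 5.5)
The argument reuses the dimension count from the proof that $\reldim C \le \reldim \widebar C$, now tracking when the single inequality there is an equality. Here "filling" is taken with respect to $W = W_{\widebar C}$ and as a face of $\widebar C$: $C$ is filling iff $T\widebar C = TC + W_{\widebar C}$. Throughout, $C^\circ \subseteq \dep(\widebar C)$, so $C$ is $\widebar C$-dependent.

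First, I will apply the earlier lemma on $C$-dependent faces, with $\widebar C$ as the lw-face and $C$ as the dependent face. Its second part gives compatibility, $W_C = W_{\widebar C} \cap TC$. The dimension formula for the intersection of subspaces then yields
\[
\dim W_C = \dim W_{\widebar C} + \dim TC - \dim\bigl(W_{\widebar C} + TC\bigr).
\]
Using $\dim TC = \dim C$, this rearranges to
\[
\reldim C = \dim\bigl(W_{\widebar C} + TC\bigr) - \dim W_{\widebar C},
\]
while by definition $\reldim \widebar C = \dim T\widebar C - \dim W_{\widebar C}$. Hence $\reldim C = \reldim \widebar C$ holds iff $\dim(W_{\widebar C} + TC) = \dim T\widebar C$.

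Second, I will convert this equality of dimensions into equality of spaces. Both $W_{\widebar C}$ and $TC$ are subspaces of $T\widebar C$: the first because $W_{\widebar C}$ is the tangent space of a leaf lying in $\widebar C$, and the second because $C$ is a face of $\widebar C$. So $W_{\widebar C} + TC \subseteq T\widebar C$, and a subspace of a finite-dimensional vector space has the same dimension as the ambient space iff it equals it. Therefore the equality of relative dimensions holds iff $T\widebar C = TC + W_{\widebar C}$, which is exactly the statement that $C$ is filling.

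The only step needing care is the setup rather than any computation. One must check that the compatibility lemma applies, which is the $\widebar C$-dependence of $C$ noted above, and that $C$ really is a face of $\widebar C$, so that $TC \subseteq T\widebar C$. The latter should follow from $C^\circ$ meeting $\dep(\widebar C) \subseteq \widebar C$ together with the fact that $\dep(\widebar C)$ is a union of open faces of $\widebar C$. Once these are in place, both directions of the equivalence come out of the same dimension identity.
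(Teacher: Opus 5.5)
Your proof is correct and follows the paper's argument. The paper proves $\reldim C \le \reldim \widebar C$ using compatibility $W_C = W_{\widebar C}\cap TC$ and the containment $W_{\widebar C} + TC \subseteq T\widebar C$, then reads the corollary off as the equality case $\dim(W_{\widebar C}+TC)=\dim \widebar C$, exactly as you do. Your explicit check that $C$ is a $\widebar C$-dependent face of $\widebar C$, so that the compatibility lemma applies, fills in a step the paper leaves implicit.
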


\section{Pullback Constructions}
We now aim to show that for any lw-surface $S$ carried positively by a face $C$ such that $\B_S$ is without isotopy relations, we have $\cone(\B_S) \cong \cone(C)/W_C$. The isomorphism is induced from $\rho$, and while one could slightly shorten the following arguments to directly prove this, doing so somewhat obfuscates the underlying projectivization process. Instead, we will explicate the projectivization and prove a more general result not requiring $\B_S$ to be without isotopy relations. Indeed, for any face $C$ and positively carried lw-surface $S$, we will define a space $W_{\B_S}$ such that $\cone(\B_S)/W_{\B_S} \cong \cone(C)/W_C$, with the isomorphism induced by $\rho$. In the case where $\B_S$ is without isotopy relations we will have $W_{\B_S} = 0$, proving the desired result. In the process we shall need a couple technical results from linear algebra, which have easy proofs and have been put in the appendix.
}
As a first step, we prove $\rho$ is injective. One important note here is that we will only allow $\B_S$ to vary by normal isotopy. This allows us to directly connect the geometry of surfaces carried by $\B_S$ with normal surfaces carried by $C$, since surfaces will geometrically be the same when carried by either $\B_S$ or $C$ (per the construction of $\B_S$), and not just isotopic. Also note $\rho$ maps each integral point in $\ccone(\B_S)$ to $\ccone(C)$ by construction, and thus does the same to rational points by scaling. Taking closures, and using the fact $\ccone(C)$ is closed, we find $\rho(\ccone(\B_S)) \subseteq \ccone(C)$, allowing us to simultaneously restrict and corestrict the map as needed. One might wish to likewise show $\rho(\cone(\B_S)) \subseteq \cone(C)$, but this is more nuanced; it could be, for example, that $\rho$ embeds $\ccone(\B_S)$ as a face of $\cone(C)$.

\begin{lemma}
	$\rho \colon \ccone(\B_S) \to \R^{7t}$ is injective
\end{lemma}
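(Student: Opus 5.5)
Since $\rho$ is the restriction of a linear map $\R^s \to \R^{7t}$, it suffices to show its kernel meets $\ccone(\B_S)$ only at the origin, and that two distinct weight systems in $\ccone(\B_S)$ never have the same image. I will organize this in two steps: first prove the second claim on the thick coordinates, then handle the thin sectors.

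\emph{Step 1: thick weights are determined by the image.} Each thick sector $\B^i$ is built by flattening and identifying normal disks of $2S$. Those disks all lie in a single normal disk type, which I will call the type of the sector. Inside each tetrahedron $\Delta$, the thick sectors of a given disk type are the pieces of $\B_S \cap \Delta$ of that type. I claim that in each $\Delta$ and for each disk type there is at most one thick sector of that type meeting $\Delta$, or more precisely that $\B_S\cap\Delta$ contains at most one flattened disk of each type. This comes from the construction: all parallel normal disks of a given type in $\Delta$ are identified into one sheet. The identification is exactly what produces the branching, since disks of the same type are normally parallel. Removing disks of contact only deletes or merges sectors and does not create two sheets of the same type in one tetrahedron. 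Granting this claim, the coordinate of $\rho(w)$ for disk type $E$ in $\Delta$ equals the weight on the unique sector containing the $E$-disk in $\Delta$. So $\rho(w)$ determines every thick weight $w_i$. Equivalently, the vectors $\vec{\B^i}$ for thick $i$ have disjoint supports in $\R^{7t}$, and are therefore linearly independent.

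\emph{Step 2: thin weights are determined by thick weights.} The paper has already noted that the weights on thin sectors are uniquely determined by those on the thick sectors. To justify this I would use the branch equations along the singular locus near $\T^2$. Each thin sector lies in a small regular neighborhood of a face of $\T^2$, and is bounded by branch loci where it meets thick sectors. Reading the branch equation $w_1=w_2+w_3$ at such a locus expresses each thin weight as a difference of sums of thick weights. The one possible exception is a thin sector meeting only other thin sectors; this can happen only for a closed union of thin sectors, such as a disk of contact near a face. Such configurations are excluded, because $\B_S$ has no disks of contact after the compression step. Combining the two steps, if $\rho(w)=\rho(w')$ for $w,w'\in\ccone(\B_S)$, then the thick weights agree by Step 1 and the thin weights agree by Step 2, so $w=w'$.

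\emph{Main obstacle.} I expect the hardest part to be making the claim in Step 1 rigorous after disks of contact are removed, and making the ``no closed thin pieces'' argument in Step 2 precise. If a thin sector could meet only thin sectors, its weight would be a free parameter in $\ker\rho$. I would first try to show directly that every thin sector has a branch curve along which it merges with a thick sector. If that fails, I would instead use the fact that the weights on $\ccone(\B_S)$ produce surfaces geometrically equal to the carried normal surface. Then a nonzero weight vector supported only on thin sectors would give a nonempty surface lying in a neighborhood of $\T^2$ with zero normal coordinates, which is impossible for a surface meeting the tetrahedra in normal disks.
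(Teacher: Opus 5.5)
Your proof uses the same two facts the paper's proof uses, but organizes them differently. The first is that thick weights are read off from the normal disk counts. The second is that thin weights are then forced by the branch (switch) equations. The paper also asserts both without proof, so at the paper's level of rigor your argument holds.

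You state both facts linearly. The normal vectors of the thick sectors have disjoint supports, and the branch equations with all thick weights zero have only the zero solution. Together these give injectivity for all real weights at once, without the appendix. The paper argues only at integral points. There, $\rho(\vec f)=\rho(\vec g)$ gives $\vec F=\vec G$, so the carried surfaces are normally isotopic and their weights agree. It then scales to rational points and extends to the whole cone by Claim~\ref{claim:injectivityResult}. That extension uses two facts: $\ker\rho$ is rational, and $\relint \ccone(\B_S)$ is open in $\aff(\ccone(\B_S))$. The advantage of the paper's route is that uniqueness of weights is only needed for actual carried surfaces, where geometric reasoning is available. Your route needs it for signed solutions of the branch equations.

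That difference is exactly where your fallback in Step~2 fails. Suppose $w,w'\in\ccone(\B_S)$ have the same thick weights. Then $w-w'$ is a thin-supported solution of the branch equations, but it need not be nonnegative. So it is not the weight system of any surface, and the `nonempty surface near $\T^2$' argument says nothing about it. For the same reason, your opening reduction is off: what you need is that $\ker\rho$ meets the linear span of $\ccone(\B_S)$ trivially, not just the cone.

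Your primary Step~2 argument is also incomplete. A branch equation along a thin/thick locus can involve several thin weights, for instance $w_{T_1}=w_{T_2}+w_K$, so it need not isolate any single thin weight.

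The repair is the paper's device. Suppose $k$ is a nonzero rational vector in $\ker\rho\cap\aff(\ccone(\B_S))$, and $w$ is a rational point in the relative interior. Rescaling $w$ and $w+\epsilon k$ gives two distinct integral weight systems with the same normal coordinates. So it suffices to prove uniqueness for carried surfaces.

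Finally, Step~1 asserts without proof that removing disks of contact never leaves two sheets of the same disk type in one tetrahedron. The paper relies on the same fact implicitly when it says the thick weights are determined by the normal disk counts.
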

\begin{proof}
	If $\rho(\vec f) = \rho(\vec g)$ for some integral weights $\vec f$ and $\vec g$ associated to surfaces $F$ and $G$, then $\vec F = \vec G$ so $F$ and $G$ are normally isotopic. Since $\B_S$ is fixed up to normal isotopy, this means $\vec f = \vec g$; for the thick sectors this is determined by requiring the normal disk counts to agree to yield a normal isotopy, and the thin sector weights are determined from the thick sector weights by the switch equations. By scaling we get injectivity on rational points, and thus injectivity on the whole cone by \thref{claim:injectivityResult} using the ambient space $\aff(\ccone(\B_S))$.
\end{proof}

Because $\rho$ is injective on $\ccone(\B_S)$, we can freely pull back constructions over $C$ to constructions over $\ccone(\B_S)$. To simplify notation in what is to follow, we will often suppress the dependence of $\B_S$ on $S$ and just write $\B$. At the level of isotopy this is fine since each such $\B$ carries the same surfaces by Lemma 3.1 of \cite{tollefsonIsotopyClassesIncompressible1995}, but at the level of normal isotopy this is a slight abuse of notation for clarity of presentation.

While some of the definitions and arguments to follow in this paper would be slightly more naturally stated by restricting $\rho$ to the affine hull of the cone of $C_\B$, this has the tradeoff of disallowing arguments involving the origin (e.g. that $\rho$ pulls back vector spaces to vector spaces, or that $\rho$ sends $0$ to $0$). As such, we have opted to consider $\rho$ as a map of ambient spaces, but the reader is safe to restrict $\rho$ whenever it is natural to do so.

The first thing we must define is a notion of weight. For any $w \in \ccone(\B)$, we define the \it{weight} of $w$, written $\wt(w)$, via $\wt(w) \coloneqq \wt(\rho(w))$. This notion of weight is linear, being the composition of linear maps. We will often want to normalize a weight system $w$, yielding $w/\wt(w)$, which we denote by $w^*$. Since we will be projectivizing by weight in what follows, this will often be referred to as the projectivization of $w$. In the case a weight system is written $\vec f$, we will simply write $f^*$ to denote the normalization. One can check immediately from the definition of weight and the linearity of $\rho$ that $\wt(\vec f) = \wt(\vec F) = \wt F$, meaning weight for surfaces remains the intersection count with the $1$-skeleton. Also immediate is the formula $\rho(w^*) = (\rho(w))^*$, which finds occasional use in calculations.

Next we pull back $C$ itself, yielding $C_{\B} \coloneqq \rho^{-1}(C) \cap \ccone(\B)$. The map $\rho \colon C_{\B} \to C$ is thus well-defined, so extends via linearity to a map $\rho \colon \ccone(C_{\B}) \to \ccone(C)$.
Recalling every vector in $C$ has weight $1$, the following is immediate:

\begin{claim}\label{claim:ccone(B)=ccone(C_B)}
	$C_{\B} = \{w \in \ccone(\B) \mid \wt(w) = 1\}$
\end{claim}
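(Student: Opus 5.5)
We prove the two inclusions directly from the definitions, using only that $\rho$ maps $\ccone(\B)$ into $\ccone(C)$ and that weight on $\ccone(\B)$ is defined by $\wt(w) \coloneqq \wt(\rho(w))$.

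For the inclusion $C_\B \subseteq \{w \in \ccone(\B) \mid \wt(w) = 1\}$, let $w \in C_\B = \rho^{-1}(C) \cap \ccone(\B)$. Then $w \in \ccone(\B)$ and $\rho(w) \in C \subseteq \PT$. Every element of $\PT$ has weight $1$, so $\wt(w) = \wt(\rho(w)) = 1$.

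For the reverse inclusion, let $w \in \ccone(\B)$ with $\wt(w) = 1$.
\begin{enumerate}[itemsep=0cm]
	\item As noted before the statement, taking closures gives $\rho(\ccone(\B)) \subseteq \ccone(C)$, so $\rho(w) \in \ccone(C)$.
	\item Since $C$ is compact and avoids the origin (all its points have weight $1$), $\ccone(C) = \R_{\ge 0} C$. So we can write $\rho(w) = r c$ with $r \ge 0$ and $c \in C$.
	\item Taking weights and using linearity of $\wt$ gives $1 = \wt(\rho(w)) = r\,\wt(c) = r$.
	\item Hence $\rho(w) = c \in C$, so $w \in \rho^{-1}(C) \cap \ccone(\B) = C_\B$.
\end{enumerate}

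The only point needing any care is step 2: the identification of $\ccone(C)$ with the set of nonnegative multiples of points of $C$. This holds because $C$ is a closed face of the weight-normalized slice $\PT$, which does not contain the origin, so every nonzero point of the cone over $C$ rescales to a unique point of $C$. Beyond this, the argument is routine unwinding of definitions. The case $\rho(w) = 0$ is excluded automatically, since that would force $\wt(w) = 0 \neq 1$.
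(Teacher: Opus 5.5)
Your proposal is correct and follows the paper's own argument: both inclusions come from unwinding $\wt(w) = \wt(\rho(w))$, and the reverse inclusion uses $\rho(\ccone(\B)) \subseteq \ccone(C)$ together with the fact that the weight-$1$ points of $\ccone(C)$ lie in $C$. Your step 2 just makes that last fact explicit; it holds by the paper's definition $\ccone(C) = \R_{\ge 0} C$, since $C$ avoids the origin.
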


\begin{proof}
	If $w \in C_{\B}$ then $\rho(w) \in C$ so $\wt(w) = \wt(\rho(w)) = 1$. Conversely, if $w \in \ccone(\B)$ has weight $1$ then $\rho(w) \in \ccone(C)$ has weight $1$ so lies in $C$.
\end{proof}

Note this means $C_{\B}$ is a convex compact polytope which does not intersect the origin. In particular, this tells us $\ccone(\B)$ is the cone over $C_{\B}$.

\begin{corollary}\label{cor:ccone(B)=ccone(C_B)}
	$\ccone(\B) = \ccone(C_{\B})$
\end{corollary}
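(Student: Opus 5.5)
The plan is to prove the two inclusions separately. The only inputs are Claim \ref{claim:ccone(B)=ccone(C_B)}, linearity of $\wt$ on $\ccone(\B)$, and the fact that $\ccone(\B)$ is a cone, i.e.\ closed under nonnegative scaling. By definition $\ccone(C_\B) = \R_{\ge 0} C_\B$.

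For $\ccone(C_\B) \subseteq \ccone(\B)$: by Claim \ref{claim:ccone(B)=ccone(C_B)}, $C_\B \subseteq \ccone(\B)$. The set $\ccone(\B)$ is cut out by the branch equations together with nonnegativity of each weight, so it is closed under multiplication by $r \ge 0$. Hence $rw \in \ccone(\B)$ for every $w \in C_\B$ and $r \ge 0$.

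For $\ccone(\B) \subseteq \ccone(C_\B)$: let $w \in \ccone(\B)$. If $w = 0$, it lies in the cone as the $r=0$ multiple of any point of $C_\B$; this uses $C_\B \neq \emptyset$, which holds because $S$ itself gives a point. Otherwise, the key step is to show that $\wt(w) > 0$. Granting this, $w^* = w/\wt(w)$ lies in $\ccone(\B)$ and has weight $1$. So $w^* \in C_\B$ by Claim \ref{claim:ccone(B)=ccone(C_B)}, and therefore $w = \wt(w)\, w^* \in \ccone(C_\B)$.

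The main obstacle is the positivity $\wt(w) > 0$ for nonzero $w \in \ccone(\B)$, since thin sectors contribute nothing to $\rho$. I would argue as follows. By injectivity of $\rho$ on $\ccone(\B)$, $w \neq 0$ gives $\rho(w) \neq 0$. Moreover $\rho(w) \in \ccone(C) \subseteq \ST \cap \R_{\ge 0}^{7t}$, and every coefficient $c_i$ in $\wt$ is strictly positive. So $\wt(\rho(w)) = \sum_i c_i \rho(w)_i > 0$, because some coordinate of $\rho(w)$ is positive and none is negative.

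This also recovers the remark that $C_\B$ avoids the origin. It shows that $\wt$ is a strictly positive linear functional on $\ccone(\B)\setminus\{0\}$, so $C_\B$ is a cross-section of $\ccone(\B)$ and the equality follows.
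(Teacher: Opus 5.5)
Your proof is correct and follows the same route as the paper, which reads the corollary off the preceding Claim ($C_\B$ is the weight-one slice of $\ccone(\B)$) by rescaling. Your explicit check that $\wt>0$ on $\ccone(\B)\setminus\{0\}$ (via injectivity of $\rho$, $\rho(\ccone(\B))\subseteq\ccone(C)\subseteq\R^{7t}_{\ge 0}$, and $c_i>0$) supplies the step the paper leaves implicit when it says $C_\B$ is compact and misses the origin; one small fix is that, after disks of contact are removed, $\B_S$ is only guaranteed to carry $S$ up to isotopy, so you should justify $C_\B\neq\emptyset$ by Tollefson's Lemma 3.1 rather than by ``$S$ itself.''
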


Because of this, we will often use the notations $\ccone(C_{\B})$ and $\cone(C_{\B})$ in place of $\ccone(\B)$ and $\cone(\B)$ to emphasize the symmetries with the normal surface world.

The final new definition we make is that of $W_{\B}$. Analogously to $C_{\B}$, we define $W_{\B} \coloneqq \rho^{-1}(W_C) \cap TC_\B$. Since $\rho$ is linear, $W_{\B}$ is itself a vector space. Since $\rho$ is injective on $C_\B$ it is also injective on $TC_\B$, so we may identify $\rho(W_\B)$ with a subspace of $W_C$. Note the restriction of $\rho$ to $\aff(\ccone(C_\B)$ sends integral points to integral points so has a rational matrix, and thus its inverse has a rational matrix. $W_\B$ is thus rational linear since $W_C$ is, e.g. since an affine basis can be made inside $C^\circ$. Finally, since the foliation by $C^\circ$ by $W_C$ is affine and this property is preserved under pullback by $\rho$, we likewise get a foliation of $C_{\B}^\circ$ by parallel translates of $W_{\B}$.

For future use, we prove the following lemma:
\begin{lemma}\label{lemma:a-bInW_Biffrho(a-b)InW_C}
	If $a,b \in \ccone(C_\B)$ then $a - b \in W_\B$ iff $\rho(a-b) \in W_C$ 
\end{lemma}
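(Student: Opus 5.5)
The forward direction is immediate from the definition $W_\B = \rho^{-1}(W_C) \cap TC_\B$. If $a - b \in W_\B$, then $a - b \in \rho^{-1}(W_C)$, so $\rho(a-b) \in W_C$. All the content is in the converse. There, given $\rho(a-b) \in W_C$, membership $a - b \in \rho^{-1}(W_C)$ is automatic, and what remains is to show $a - b \in TC_\B$.

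For this I will use weights.

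\emph{Step 1.} $W_C$ is the tangent space of an affine subspace $X_F$ of $\PT$, and every point of $\PT$ has weight $1$. Hence $W_C$ consists of weight-zero vectors, and linearity of $\wt$ on $\R^{7t}$ gives $\wt(\rho(a-b)) = 0$. By definition of weight on $\ccone(\B)$ and linearity of $\rho$, this gives $\wt(a) = \wt(b)$. Call this common value $\alpha \ge 0$.

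\emph{Step 2: the case $\alpha = 0$.} I claim $a = b = 0$. Suppose $a \neq 0$. Since $\rho$ is injective on $\ccone(\B)$ and $\rho(0) = 0$, we get $\rho(a) \ne 0$. Also $\rho(a) \in \ccone(C)$, the cone over a polytope of weight-$1$ points. Every nonzero point of that cone has positive weight, so $\wt(a) = \wt(\rho(a)) > 0$, a contradiction. The same argument applies to $b$. So $a - b = 0$, which lies in the vector space $W_\B$.

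\emph{Step 3: the case $\alpha > 0$.} Write $a = \alpha a^*$ and $b = \alpha b^*$. By Claim \ref{claim:ccone(B)=ccone(C_B)}, both $a^*$ and $b^*$ lie in $C_\B$. Then $a - b = \alpha(a^* - b^*)$ with $a^* - b^* \in TC_\B$. Here I regard $TC_\B$, as the paper implicitly does when calling $W_\B$ a vector space, as the linear space spanned by differences of points of $C_\B$ (the tangent space of $\aff C_\B$). So it is closed under scaling, and $a - b \in TC_\B$. Combined with $\rho(a-b) \in W_C$, this gives $a - b \in W_\B$.

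The only delicate point is the bookkeeping in Step 3: $TC_\B$ must be interpreted as a linear space (the tangent space of the affine hull) rather than merely the set of differences. Otherwise scaling by $\alpha > 1$ could leave the set. Under that reading, which the paper's own usage requires, the argument is routine. Positivity of weight on nonzero cone points is the one fact that needs the injectivity lemma for $\rho$.
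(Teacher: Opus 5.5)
Your proof is correct and follows the paper's argument. The forward direction is immediate from $W_\B = \rho^{-1}(W_C)\cap TC_\B$, and the converse uses $W_C \subseteq \ker(\wt)$ to get $\wt(a)=\wt(b)$ and hence $a-b\in TC_\B$. You make explicit two details the paper leaves implicit: the zero-weight case (which also follows directly from $\ccone(C_\B)=\R_{\ge 0}C_\B$, without needing injectivity of $\rho$), and the fact that $TC_\B$ must be read as the linear tangent space of $\aff C_\B$ for the rescaling step to be valid.
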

\begin{proof}
	The forward direction is obvious, just being an application of $\rho$. For the backward direction, note $\wt(a-b) = \wt(\rho(a-b))= 0$ so $\wt(a) = \wt(b)$ so $a-b \in T C_\B$ so $a-b \in \rho^{-1}(W_C) \cap TC_\B = W_\B$
\end{proof}

Finally, we wish to define $\dep(C_\B)$. There are two candidate definitions for this object, namely as $\{x \in C_\B \mid x + W_\B$ \,\text{meets}\, $C^\circ_\B\}$, and as $\rho^{-1}(\dep(C)) \cap C_\B$. We shall show the equivalence of these definitions later.

\section{Isomorphism of Cones}

Our next goal is to prove a relationship between $\ccone(C)$ and $\ccone(C_{\B})$. We have already observed both are convex cones over polytopes and are related via an injective linear map $\rho$. We will further prove that, after passing to quotients, the two cones become isomorphic.

To begin, we define $C' \coloneqq C/W_C$ and $C_{\B}' \coloneqq C_{\B}/W_{\B}$. As a corollary to \thref{claim:quotientsCommuteWithConvexCone}, we have:

\begin{corollary}\label{cor:equivConeDefs}\phantom{.}
	\begin{itemize}[]
		\begin{minipage}{0.4\linewidth}
		\item $\ccone(C') = \ccone(C)/W_C$
		\item $\cone(C') = \cone(C)/W_C$
		\end{minipage}
		\begin{minipage}{0.4\linewidth}
		\item $\ccone(C'_{\B}) = \ccone(C_{\B})/W_{\B}$
		\item $\cone(C'_{\B}) = \cone(C_{\B})/W_{\B}$
		\end{minipage}
	\end{itemize}
\end{corollary}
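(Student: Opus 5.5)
The corollary follows by applying \thref{claim:quotientsCommuteWithConvexCone} four times, once to each pair (polytope, subspace). That claim states, roughly, that for a compact convex set $P$ not containing the origin and a linear subspace $W$ with $W \subseteq TP$, forming the cone commutes with passing to the quotient by $W$. So the plan is to check the hypotheses in the two cases and then read off the four identities.

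\textbf{Case $(C, W_C)$.} Three hypotheses are needed.
\begin{itemize}
\item $C$ is a convex compact polytope: it is a face of $\PT$.
\item $C$ avoids the origin: every point of $C$ has weight $1$.
\item $W_C \subseteq TC$: $W_C$ is by definition the tangent space of the affine leaves $X_F \cap C^\circ$, which are subsets of $C$.
\end{itemize}
Since $\wt$ vanishes on $W_C$, the quotient polytope $C' = C/W_C$ still lies in the affine hyperplane $\{\wt = 1\}$ of the quotient space. Hence $\ccone(C')$ is well defined, and the claim gives $\ccone(C') = \ccone(C)/W_C$.

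\textbf{Case $(C_\B, W_\B)$.} By Claim~\ref{claim:ccone(B)=ccone(C_B)} and the remark after it, $C_\B$ is a compact convex polytope on the weight-one slice, so it misses the origin. The inclusion $W_\B \subseteq TC_\B$ holds by the definition $W_\B = \rho^{-1}(W_C)\cap TC_\B$. Weight is linear and vanishes on $W_\B$, because $\rho(W_\B)\subseteq W_C$ and $\wt$ vanishes on $W_C$. So the same argument gives $\ccone(C'_\B) = \ccone(C_\B)/W_\B$. Corollary~\ref{cor:ccone(B)=ccone(C_B)} identifies this with $\ccone(\B)/W_\B$.

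\textbf{Open versions.} We need $\cone(X') = \cone(X)/W$, where $\cone(X) = \R_{>0}X^\circ$ uses the relative interior. The key fact is that the quotient map $q$ is linear, surjective and open onto its image. Therefore it maps relative interiors of convex sets onto relative interiors: $q(\relint X) = \relint q(X)$, which is the standard behaviour of relative interiors under linear maps. Applying positive scaling to both sides, $\R_{>0}\, q(X^\circ) = q(\R_{>0} X^\circ)$ because $q$ is linear. This gives $\cone(C') = \cone(C)/W_C$ and $\cone(C'_\B) = \cone(C_\B)/W_\B$. If \thref{claim:quotientsCommuteWithConvexCone} already packages the open-cone statement, this step is a direct citation.

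\textbf{Main obstacle.} The only real subtlety is bookkeeping. The quotient must preserve the property that the set does not meet the origin, so the cone over the quotient is nondegenerate and not a full subspace. This is exactly where the weight-one normalization and the inclusion $W \subseteq \ker\wt$ are used. I would state this verification explicitly for $W_\B$, since there it relies on the linearity of $\wt = \wt\circ\rho$.
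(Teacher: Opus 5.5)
Your proposal is correct and is essentially the paper's argument. The paper obtains the corollary by applying the appendix claim (quotients commute with $\ccone$ and $\cone$) to the pairs $(C,W_C)$ and $(C_\B,W_\B)$, and your relative-interior argument for the open cones is exactly the appendix proof of its second equality. The one difference is that the claim as stated only needs $A$ to be convex, so your extra checks (compactness, $W\subseteq TA$, avoiding the origin) are not required by the claim. Still, your observation that $C'$ and $C'_\B$ lie on the weight-one slice is what makes $\ccone(C')$ and $\ccone(C'_\B)$ meaningful under the paper's definition of a cone through a closed set missing the origin.
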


With \thref{cor:equivConeDefs} in mind, we now define $\rho' \colon \ccone(C'_{\B}) \to \ccone(C')$ via $\rho'([w]) = [\rho(w)]$. It is clear $\rho'$ is linear if well-defined, and we now simultaneously prove well-definedness and injectivity. Given $[a], [b] \in \ccone(C'_{\B})$ with $a, b \in \ccone(C_{\B})$, we find:
\begin{align*}
	[a] = [b] &\iff a - b \in W_{\B}
	\\&\iff \rho(a - b) \in W_C
	\\&\iff \rho(a) - \rho(b) \in W_C
	\\&\iff [\rho a] = [\rho b]
\end{align*}

Each forward implication is immediate, and each backward implication is immediate except for the second which follows from \thref{lemma:a-bInW_Biffrho(a-b)InW_C}.

We now seek to prove $\rho'$ is surjective. To assist with the proof, we first observe Corollary 3.7 of \cite{dunfieldCountingEssentialSurfaces2022} has the following corollary:

\begin{claim}\label{claim:corr3.7Improved}
	If $F$ and $G$ are isotopic normal surfaces carried by the same lw-face $C$, then $\vec F - \vec G \in W_C$.
\end{claim}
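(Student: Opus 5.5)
The plan is to reduce the claim to the characterization of $W_C$ via projective isotopy and the translation between $\vec F - \vec G$ and $F^* - G^*$. Let $F$ and $G$ be isotopic normal surfaces carried by the lw-face $C$. Every surface carried by $C$ is an lw-surface. Since $F \sim G$ and both are least weight in the same isotopy class, we get $\wt F = \wt G$; call this common value $k$. Then
\[
\vec F - \vec G = k\,(F^* - G^*),
\]
and $W_C$ is a vector space. So it suffices to show $F^* - G^* \in W_C$.

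To get this, I would invoke Corollary 3.7 of \cite{dunfieldCountingEssentialSurfaces2022}. It should say that projectively isotopic lw-surfaces carried by an lw-face $C$ have projectivizations differing by an element of $W_C$, or an equivalent statement phrased with $\dep(C)$. Isotopic surfaces are in particular projectively isotopic. There is one subtlety: $F$ and $G$ need only be carried by $C$, not by $C^\circ$, so their carriers $D_F$ and $D_G$ may be proper faces. Then I would argue as follows.

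If $F$ has carrier $D_F$, then $F^* \in D_F^\circ$. If $D_F$ is $C$-dependent, then $F^* \in \dep(C)$. In that case the defining property $\dep(C) = \{x \in C \mid x + W_C \text{ meets } C^\circ\}$, together with Theorem 3.5 of \cite{dunfieldCountingEssentialSurfaces2022} (used as in the proof of the first lemma of the Minimal Completions section), gives $F^* - G^* \in W_C$ once both lie in $\dep(C)$. If instead $F$ and $G$ lie in a common $C$-independent face, I would pass to the smallest face $D$ of $C$ containing both $D_F$ and $D_G$, and apply the same theorem to $D$. Then $F^* - G^* \in W_D \subseteq W_C$, using the inclusion $W_D \subseteq W_C \cap TD$ noted in the paper. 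The expected way to get such a face is to use $F + G$: since $F \sim G$ are lw, the Haken sum $F + G$ is normal and carried by a face containing both, and its projectivization lies in $D^\circ$.

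The main obstacle is exactly this case analysis on carriers. Corollary 3.7 is presumably stated for surfaces in the interior or in $\dep$, so I must make sure $F$ and $G$ sit in a face where the corollary applies. If that face is dependent in $C$, I use $\dep(C)$ directly. Otherwise I use that lw-faces are closed under taking faces, so $D$ is itself an lw-face with $W_D \subseteq W_C$. The equal-weight step, that isotopic lw-surfaces have equal weight, is immediate from the definition of least weight.
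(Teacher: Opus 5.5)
Your main line is the paper's proof. Isotopic lw-surfaces have equal weight $k$, so $\vec F - \vec G = k\,(F^* - G^*)$, and $F^* - G^* \in W_C$ follows from Corollary 3.7 of \cite{dunfieldCountingEssentialSurfaces2022}, since $F \sim G$ implies $F \projsim G$. The paper invokes that corollary directly for surfaces carried anywhere in $C$, not just in $\dep(C)$; this is what it later means by letting surfaces ``wander outside $\dep C$''. So no case analysis on carriers is needed.

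You should drop the case analysis rather than keep it as a safeguard, because it would not work on its own.

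First, the inclusion $W_D \subseteq W_C$ is circular here. In your second case $D$ lies in a $C$-independent face, so $D$ is itself $C$-independent. For such $D$, the inclusion is an instance of the statement you are proving: it says that projectively isotopic surfaces carried by $D^\circ$, which lies outside $\dep(C)$, differ projectively by an element of $W_C$. So it cannot come from Theorem 3.5 for $C$. It needs exactly the general form of Corollary 3.7 that the fallback was meant to avoid.

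Second, applying Theorem 3.5 to $D$ requires $F^*, G^* \in \dep(D)$, which you never check. The natural route is to show $F + G \projsim F$. That is an additivity property of isotopy under Haken sum inside an lw-face, and it is not automatic. If you did have such an additivity property, a cleaner fallback avoids $W_D$ altogether. Add a fixed surface $H$ carried by $C^\circ$ to both, so that $H+F \sim H+G$ are carried by $C^\circ \subseteq \dep(C)$. Theorem 3.5 for $C$, together with $\wt(H+F) = \wt(H+G)$, then gives $\vec F - \vec G \in W_C$ directly.

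Finally, your split omits the mixed case where only one of $F^*, G^*$ lies in $\dep(C)$. That case is vacuous, since projective isotopy preserves membership in $\dep(C)$, but it should be said.
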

\begin{proof}
	$F \sim G$ in particular implies $F \projsim G$ so Corollary 3.7 of \cite{dunfieldCountingEssentialSurfaces2022} implies $F^* - G^* \in W_C$. Since $F$ and $G$ are lw-surfaces and $F \sim G$ we know $\wt(F) = \wt(G)$, so scaling yields the claim.
\end{proof}

For our proof of surjectivity, we will need to be a little careful since linear maps are not generally closed maps, even in finite dimensions; indeed, consider the linear map $\pi \colon \R^2 \to \R$ given by projection to the first coordinate, and consider the image under $\pi$ of the graph of $\arctan x$. Linear maps \it{do} however map compact sets to compact sets, which we will use. Now note $\ccone(C) = \overline{\Q^{7t} \cap \cone(C)}$. Since $\Q^{7t} \cap \cone(C)$ is dense, its projection under the quotient map (which is continuous and surjective) is likewise dense. Using a result of Tollefson, we get the following:
\begin{lemma}
	The image of $\rho' \colon \ccone(C'_\B) \to \ccone(C')$ contains $\Q^{7t} \cap \cone(C)$.
\end{lemma}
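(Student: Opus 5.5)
The statement should be read modulo $W_C$: the claim is that for every rational point $x \in \Q^{7t} \cap \cone(C)$, its class $[x]$ lies in $\rho'(\ccone(C'_\B))$. The plan is to realize $x$ by a normal surface, transport that surface to $\B$ using Tollefson's Lemma 3.1, and then use \thref{claim:corr3.7Improved} to show the two normal representatives differ by an element of $W_C$.

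First, scale $x$ by a positive integer so that it is an integral admissible point. Since $x \in \cone(C)$ and $C$ is a face of $\PT$, admissibility is automatic: every point of $\ccone(C)$ has at most one quad type per tetrahedron, because faces of $\PT$ carry only admissible vectors. So $x = \vec F$ for some normal surface $F$ with carrier $C$. Since $C$ is an lw-face, $F$ is an lw-surface. Because $\ccone(C'_\B)$ and $\ccone(C')$ are cones and $\rho'$ is linear, it suffices to handle this scaled point.

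Next, by Lemma 3.1 of \cite{tollefsonIsotopyClassesIncompressible1995}, $F$ is isotopic to some surface $G$ carried by $\B$, say $G = \B(\vec g)$ with $\vec g \in \ccone(\B) = \ccone(C_\B)$ (\thref{cor:ccone(B)=ccone(C_B)}). Then $\rho(\vec g) = \vec G$. Moreover $\vec G \in \ccone(C)$, because $\rho(\ccone(\B)) \subseteq \ccone(C)$, so $G$ is carried by $C$ and hence is also an lw-surface. Now $F$ and $G$ are isotopic normal surfaces carried by the same lw-face $C$, so \thref{claim:corr3.7Improved} gives $\vec F - \vec G \in W_C$. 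Therefore $[x] = [\vec F] = [\rho(\vec g)] = \rho'([\vec g])$, with $[\vec g] \in \ccone(C'_\B)$ by \thref{cor:equivConeDefs}. Undoing the scaling completes the argument.

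The main obstacle is the step invoking Tollefson's lemma. I need the surface $G$ carried by $\B$ to be carried by the \emph{same} face $C$, not just by some face. Here I use the construction of $\B = \B_S$: every surface it carries is a normal surface assembled from the normal disks of $2S$, so it lies in $\ccone(C)$, even though it might lie in a proper face of $C$. This is sufficient, because \thref{claim:corr3.7Improved} only requires both surfaces to be carried by $C$, not carried positively. I also need to confirm that $G$ is normal and not merely isotopic to a normal surface; this follows because surfaces carried by $\B$ are built from normal disks (and thin sectors lying near $\T^2$, which contribute nothing to normal coordinates).
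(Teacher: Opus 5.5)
Your proposal is correct and is essentially the paper's own proof: clear denominators, realize the lattice point as a normal lw-surface $F$, use Tollefson's Lemma 3.1 to get an isotopic $G=\B(\vec g)$ carried by $\B$ and hence by $C$, and then use the claim that isotopic normal surfaces carried by the same lw-face differ by an element of $W_C$ to get $[\vec F]=[\rho(\vec g)]=\rho'([\vec g])$. Two minor points: the paper also doubles ($2z=\vec F$) so that $F$ is orientable, matching its definition of lw-surfaces, which costs you nothing by scale-invariance; and admissibility should be justified by $C$ being an lw-face, not by the false claim that every face of $\PT$ carries only admissible vectors.
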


\begin{proof}
	Fix $[z]$ in the projection, with representative $z \in \Q^{7t} \cap \cone(C)$.

	First we handle the integral case. Let $z \in \Z^{7t} \cap \cone(C)$, so $2z = \vec F$ where $F$ is an orientable and normal surface. Note $F$ must be an lw-surface, being carried by the lw-face $C$.  By Lemma 3.1 in \cite{tollefsonIsotopyClassesIncompressible1995}, there exists a normal surface $G$ isotopic to $F$ with $G$ carried by $\B_S$ (and thus by $C$) with weight system $\vec g$. By \thref{claim:corr3.7Improved} we have $\vec F - \vec G \in W_C$, so $[\vec F] = [\vec G]$. Recall the definition of $\rho$ yields $\rho(\vec g) = \vec G$, so
	\[\rho'([\vec g]) = [\rho(\vec g)] = [\vec G] = [\vec F] = 2z\]
	which implies $\rho'([\tfrac{1}{2} \vec g]) = z$.

	Now suppose $z$ has rational coordinates. Choose $k$ such that $kz$ has integer coordinates so is a normal surface. By the previous case we can find $w \in \ccone(C_{\B})$ such that $\rho'([w]) = [k z]$, so $\rho'([k^{-1}w]) = [z]$.
\end{proof}

Now note weight descends down to $\R^s / W_\B$ via $\wt([w]) = \wt(w)$; indeed, if $[w_1] = [w_2]$ then $w_1 - w_2 \in W_\B$, but $W_\B$ has weight zero. The same argument shows weight descends to $\R^{7t} / W_C$. We can then check that $\rho'$ preserves weight, since $\wt(\rho'([w])) = \wt([\rho(w)]) = \wt(\rho(w)) = \wt(w) = \wt([w])$. In particular, this means $\rho'$ sends $C'_\B$ into $C'$. Since $\Q^{7t} \cap C$ is dense in $C$ (here we use the fact it has weight $1$, since an arbitrary horizontal slice might contain no rational points) we likewise conclude  $\Q^{7t} \cap C'$ is dense in $C'$, and so $\rho'(C'_\B)$ is dense in $C'$ via the argument in the previous lemma; indeed, $C'$ and $C'_\B$ are the weight-1 slices in their respective quotient cones, and if $\wt([z]) = 1$ then $\wt([k^{-1}w]) = \wt(k^{-1}w) = \wt(\rho(k^{-1}w)) = \wt(z) = \wt([z]) = 1$. Since $\rho'$ is continuous and $C'_\B$ is compact, we know $\rho'(C'_\B)$ is compact, so that $\rho'(C'_\B) = C'$. By scaling, we get surjectivity at the level of closed cones.

By extending $\rho'$ to a linear map on the affine span of $\ccone(C'_\B)$ we of course have an inverse, since injectivity on an open subset of the affine span implies injectivity on the whole affine span. We conclude:

\begin{thm}
	$\rho' \colon \ccone(C'_\B) \to \ccone(C')$ is a linear homeomorphism.
\end{thm}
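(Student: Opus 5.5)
The theorem is assembled from three facts established just before it: $\rho'$ is well defined and injective, it is surjective onto $\ccone(C')$, and it is continuous with a continuous inverse. The plan is to package these into a linear bijection of closed cones and then check the topology.

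\textbf{Linear bijection.} Linearity of $\rho'$ is inherited from $\rho$, since $\rho'([w])=[\rho(w)]$ and both quotient maps are linear. Well-definedness and injectivity come from the chain of equivalences $[a]=[b]\iff \rho(a)-\rho(b)\in W_C$. The only nontrivial link in that chain is \thref{lemma:a-bInW_Biffrho(a-b)InW_C}, which relies on weight being preserved by $\rho$.

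\textbf{Surjectivity.} I will argue at the level of weight-one slices and then scale.
\begin{enumerate}
	\item The preceding lemma, built on Tollefson's Lemma 3.1 and \thref{claim:corr3.7Improved}, shows that every class of a rational point of $\cone(C)$ lies in the image.
	\item Since $\rho'$ preserves weight, every weight-one rational class in $C'$ has a weight-one preimage in $C'_\B$.
	\item Weight-one rational points are dense in $C$, and the quotient map is continuous and surjective. Hence $\rho'(C'_\B)$ is dense in $C'$.
	\item $C'_\B$ is compact, being the image of the compact polytope $C_\B$ under a quotient map, so $\rho'(C'_\B)$ is compact and therefore closed. It follows that $\rho'(C'_\B)=C'$.
	\item Scaling extends this to $\rho'(\ccone(C'_\B))=\ccone(C')$, using \thref{cor:equivConeDefs} to identify the cones over the quotients with the quotients of the cones.
\end{enumerate}

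\textbf{Homeomorphism.} A linear bijection between convex cones is not automatically a homeomorphism in the relative topology unless it is restricted correctly. I will extend $\rho'$ linearly to the linear span $V$ of $\ccone(C'_\B)$ inside $\R^s/W_\B$. A linear map injective on a cone with nonempty interior in $V$ is injective on all of $V$, since differences of cone elements span $V$; this is \thref{claim:injectivityResult}. Its image is the span of $\ccone(C')$, so $\rho'$ becomes a linear isomorphism between finite-dimensional spaces. Linear maps on finite-dimensional spaces are continuous in both directions, so restricting to the cones gives the linear homeomorphism.

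\textbf{Main obstacle.} The delicate point is the surjectivity step. Images of closed sets under linear maps need not be closed, so I must work with the compact weight-one slice rather than the whole cone. I also need that weight-one rational points are dense in $C$, not just that rational points are dense in $\ccone(C)$. This holds because $C$ is a rational polytope, so its rational points are dense in it. I also need that the quotient of a rational dense set remains dense in $C'$.
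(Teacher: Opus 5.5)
Your proposal is correct and follows the paper's proof essentially step for step. Well-definedness and injectivity come from the equivalence chain through the lemma $a-b\in W_\B \iff \rho(a-b)\in W_C$. Surjectivity comes from showing that $\rho'(C'_\B)$ is a dense compact subset of $C'$, using the Tollefson-based lemma on rational classes together with weight preservation, and then scaling. Continuity of the inverse comes from extending $\rho'$ to the linear span.

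One small quibble: you cite the appendix claim for injectivity on $V$, but that claim addresses passing from rational points to an open set. Your own remark that differences of cone elements span $V$ already gives injectivity on $V$ directly, so the citation is unnecessary.
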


In particular, $\rho'$ must send interiors to interiors, so sends $\cone(C'_\B)$ to $\cone(C')$. At the level of dimensions, this has an important corollary we will need later:

\begin{corollary}
	$\dim \cone(C'_\B) = \dim \cone(C')$.
\end{corollary}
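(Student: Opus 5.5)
The plan is to obtain the dimension equality directly from the preceding theorem: $\rho' \colon \ccone(C'_\B) \to \ccone(C')$ is a linear homeomorphism, and linear homeomorphisms between convex sets preserve affine dimension. I will work with the dimension of a convex set, defined as the dimension of its affine hull. This matches how $\dim \cone(\B_i)$ and $\dim C$ are used earlier.

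The first step is to check that $\rho'$ restricts to a homeomorphism $\cone(C'_\B) \to \cone(C')$. By \thref{cor:equivConeDefs}, $\cone(C'_\B) = \cone(C_\B)/W_\B$ is the relative interior of $\ccone(C'_\B)$, and likewise $\cone(C')$ is the relative interior of $\ccone(C')$.
\begin{itemize}
\item The relative interior of a convex set is intrinsic to its affine hull.
\item $\rho'$ extends, as noted just before the theorem, to an injective linear map on $\aff(\ccone(C'_\B))$.
\item Since $\rho'$ is surjective onto $\ccone(C')$, this extension carries $\aff(\ccone(C'_\B))$ onto $\aff(\ccone(C'))$.
\end{itemize}
Hence the extension is an affine isomorphism of affine hulls that maps $\ccone(C'_\B)$ onto $\ccone(C')$. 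It therefore maps relative interiors onto relative interiors, which is the remark following the theorem.

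The second step is the dimension count. An injective linear map preserves the dimension of affine hulls, so
\[\dim \aff(\ccone(C'_\B)) = \dim \aff(\ccone(C')).\]
Because a convex set and its relative interior have the same affine hull, the same equality holds for $\cone(C'_\B)$ and $\cone(C')$. This is exactly the claim.

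The only point that needs care is making sure the extension of $\rho'$ to the affine span is well defined and injective on the quotient spaces $\R^s/W_\B$ and $\R^{7t}/W_C$.
\begin{itemize}
\item Well-definedness and injectivity on $\ccone(C'_\B)$ come from the chain of equivalences using \thref{lemma:a-bInW_Biffrho(a-b)InW_C}.
\item These extend to the linear span by writing any vector of the span as a difference of cone elements. Here we use that $\ccone(C'_\B)$ has nonempty relative interior, so it spans its linear hull.
\end{itemize}
With this in place, the corollary follows immediately from linear algebra.
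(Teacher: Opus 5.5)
Your proposal is correct and follows the paper's route. The paper likewise reads the corollary off the theorem that $\rho' \colon \ccone(C'_\B) \to \ccone(C')$ is a linear homeomorphism, which sends (relative) interiors to interiors and preserves dimension. Your extra care about extending $\rho'$ injectively to the linear span just spells out what the paper states in one line before the theorem.
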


Also, since $\rho'$ preserves cone `height' (in the sense of the weight functional), we have:

\begin{corollary} \label{corr:C'IsomorphicToC'_B}
	$\rho' \colon C'_\B \to C'$ is a linear homeomorphism.
\end{corollary}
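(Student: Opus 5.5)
The plan is to obtain Corollary \ref{corr:C'IsomorphicToC'_B} by restricting the linear homeomorphism $\rho' \colon \ccone(C'_\B) \to \ccone(C')$ of the preceding theorem to the weight-one slices. Weight is a linear functional on both quotient cones, and $\rho'$ preserves it. So the real work is to identify $C'_\B$ and $C'$ as exactly the weight-one slices of $\ccone(C'_\B)$ and $\ccone(C')$, and then check that a weight-preserving linear homeomorphism of cones restricts to a homeomorphism of slices.

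First I would show that $C' = \{[x] \in \ccone(C') \mid \wt([x]) = 1\}$. The inclusion $\subseteq$ is immediate, since every element of $C$ has weight $1$ and weight descends to the quotient. For $\supseteq$, take $[x] \in \ccone(C')$ with $\wt([x]) = 1$. By \thref{cor:equivConeDefs} I may pick a representative $x \in \ccone(C)$. Then $\wt(x) = 1$, so $x \in C$, because $\ccone(C) \cap \{\wt = 1\} = C$ (each nonzero point of $\ccone(C)$ is $r c$ with $c \in C$ and $r = \wt(rc)$). The same argument applies verbatim on the branched-surface side: use \thref{claim:ccone(B)=ccone(C_B)} and \thref{cor:ccone(B)=ccone(C_B)} to get $C'_\B = \{[w] \in \ccone(C'_\B) \mid \wt([w]) = 1\}$.

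Next, since $\wt(\rho'([w])) = \wt([w])$, the map $\rho'$ sends the weight-one slice of $\ccone(C'_\B)$ into the weight-one slice of $\ccone(C')$. Its inverse, which is also linear, likewise preserves weight: apply $\wt \circ \rho' = \wt$ to $\rho'^{-1}(y)$. Hence the inverse maps $C'$ into $C'_\B$, so the restriction $\rho'|_{C'_\B} \colon C'_\B \to C'$ is a bijection. Linearity and continuity of the restriction and of its inverse are inherited from the theorem, so the restriction is a linear homeomorphism. This surjectivity was in fact already observed in the argument before the theorem, where $\rho'(C'_\B) = C'$ was obtained via density and compactness; I would cite that argument as a cross-check.

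The only subtle point is the slice identification in the quotient. I need every class of weight one to have a representative lying in $\ccone(C)$, not merely in the ambient space. This is exactly what \thref{cor:equivConeDefs} provides, since it identifies $\ccone(C')$ with the image $\ccone(C)/W_C$. I also need weight to be well defined on classes, which holds because $W_C$ and $W_\B$ lie in the kernel of $\wt$; that was shown just before the theorem.
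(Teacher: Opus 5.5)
Your proposal is correct and takes the same route as the paper. The paper deduces the corollary in one line from the preceding theorem, using only that $\rho'$ preserves the weight functional, with $C'$ and $C'_\B$ being the weight-one slices of the two quotient cones. You additionally make explicit the two points the paper leaves implicit: that these slices are exactly $C'$ and $C'_\B$ (via representatives in $\ccone(C)$ and $\ccone(C_\B)$), and that $(\rho')^{-1}$ also preserves weight. Your cross-reference to the earlier density-and-compactness argument, which already gave $\rho'(C'_\B) = C'$, is apt.
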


\section{Properties of Pullback Constructions}

Let us now define $\dep(C_\B) \coloneqq \{x \in C_\B \mid x + W_\B$ \,\text{meets}\, $C^\circ_\B\}$. An important observation for the following is that each class in the interior of $C / W_C$ can be geometrically thought of as a translated copy of $W_C$ intersecting $C^\circ$; in particular, each interior class of $C/W_C$ is represented by some element of $C^\circ$, and likewise each interior class of $C_\B / W_\B$ is represented by some element of $C^\circ_\B$.

In the following proof, we will write $[\cdot]_{W_C}$ and $[\cdot]_{W_\B}$ to specify the equivalence classes in $C/W_C$ and $C_\B / W_\B$ respectively. Note $x \in C_\B$ lies in $\dep(C_\B)$ iff $[x] = [c']$ for some $c' \in C_\B^\circ$, and likewise $y \in C$ lies in $\dep(C)$ iff $[x] = [c]$ for some $c \in C^\circ$.

\begin{lemma}
	$\dep(C_\B) = \rho^{-1}(\dep(C)) \cap C_\B$.
\end{lemma}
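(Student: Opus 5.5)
I would prove the two inclusions separately, using the characterization recalled just before the statement: $x \in C_\B$ lies in $\dep(C_\B)$ iff $[x]_{W_\B} = [c']_{W_\B}$ for some $c' \in C_\B^\circ$, and $y \in C$ lies in $\dep(C)$ iff $[y]_{W_C} = [c]_{W_C}$ for some $c \in C^\circ$. The tools are \thref{lemma:a-bInW_Biffrho(a-b)InW_C}, which transfers the relation ``difference lies in $W$'' between the two sides, and \thref{corr:C'IsomorphicToC'_B}, which says $\rho'\colon C'_\B \to C'$ is a linear homeomorphism and therefore carries relative interiors to relative interiors.

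For $\dep(C_\B) \subseteq \rho^{-1}(\dep(C)) \cap C_\B$, take $x \in \dep(C_\B)$ and choose $c' \in C_\B^\circ$ with $x - c' \in W_\B$. Applying $\rho$ gives $\rho(x) - \rho(c') \in W_C$, and $\rho(x) \in C$. It remains to show that $\rho(x)$ is $W_C$-equivalent to a point of $C^\circ$. Since $c' \in C_\B^\circ$, its class $[c']_{W_\B}$ lies in the relative interior of $C'_\B$. The map $\rho'$ is a homeomorphism of polytopes, so $[\rho(c')]_{W_C} = \rho'([c']_{W_\B})$ lies in the relative interior of $C'$. By the observation preceding the lemma, every interior class of $C/W_C$ is represented by an element $c \in C^\circ$. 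Hence $\rho(x) - c \in W_C$, so $\rho(x) \in \dep(C)$.

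For the reverse inclusion, take $x \in C_\B$ with $\rho(x) \in \dep(C)$, and choose $c \in C^\circ$ with $\rho(x) - c \in W_C$. The class $[c]_{W_C}$ is interior in $C'$. Its preimage under $\rho'$ is interior in $C'_\B$, so it is represented by some $c' \in C_\B^\circ$, with $\rho'([c']) = [c]$, that is, $\rho(c') - c \in W_C$. Then $\rho(x - c') \in W_C$, and since $x, c' \in \ccone(C_\B)$, \thref{lemma:a-bInW_Biffrho(a-b)InW_C} gives $x - c' \in W_\B$. Therefore $x \in \dep(C_\B)$.

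The main obstacle is justifying that ``interior classes'' of the quotient correspond exactly to classes meeting the open face, i.e.\ $\relint(C/W_C) = C^\circ/W_C$, and likewise on the $\B$ side. I would prove this directly. The linear projection of the polytope $C$ onto $TC/W_C$ is an open map onto its image $C'$ relative to affine hulls, because a surjective linear map between affine spaces is open. So the image of the relatively open set $C^\circ$ is relatively open, hence contained in $\relint C'$. Conversely, the image of $C^\circ$ is dense in $C'$, and it is convex and relatively open. A relatively open convex dense subset of a convex body is its relative interior, so it equals $\relint C'$. The same argument applies verbatim to $C_\B$. With this in hand, both inclusions follow as above.
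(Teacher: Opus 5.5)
Your proposal is correct and follows essentially the same route as the paper. Both inclusions come from pushing an interior class through the linear homeomorphism $\rho'\colon C'_\B \to C'$ and then transferring the relation ``difference lies in $W$'' between $W_\B$ and $W_C$; the paper inlines that transfer as ``$c'-x \in TC_\B$, so $c'-x\in\rho^{-1}(W_C)\cap TC_\B = W_\B$''.

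The one addition is your explicit proof that $\relint(C/W_C) = C^\circ/W_C$, and likewise for $C_\B$, which the paper only asserts as a geometric observation. Your open-map-plus-density argument is valid. The fact also follows directly from linear maps commuting with relative interiors of convex sets, as the appendix already uses.
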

\begin{proof}
	First suppose $x \in \dep(C_\B)$, so $[x]_{W_\B} = [c']_{W_\B}$ for some $c' \in C_\B^\circ$. Since $C'_\B \cong C'$ via $\rho'$, we know there exists some $c \in C^\circ$ such that $\rho'([c']_{W_\B}) = [c]_{W_C}$. As such, 
	\[[\rho(x)]_{W_C} = \rho'([x]_{W_\B}) = \rho'([c']_{W_\B}) = [c]_{W_C}\]
	so that $\rho(x) + W_C = c + W_C$, i.e. $c \in \rho(x) + W_C$.\\Since $\rho(x) \in C$ and $c \in C^\circ$, we conclude $\rho(x) \in \dep(C)$. Since $\dep(C_\B) \subset C_\B$, we have proved $\dep(C_\B) \subseteq \rho^{-1}(\dep(C)) \cap C_\B$.

	The other direction is nearly identical, just with more details to check. Let $x \in \rho^{-1}(\dep(C)) \cap C_\B$, so $\rho(x) \in \dep(C)$ so $[\rho(x)]_{W_C} = [c]_{W_C}$ for some $c \in C^\circ$. Since $C'_\B \cong C'$ via $\rho'$, we know there exists some $c' \in C_\B^\circ$ such that $\rho'([c']_{W_\B}) = [c]_{W_C}$. As such,
	\[[\rho(c')]_{W_C} = \rho'([c']_{W_\B}) = [c]_{W_C} = [\rho(x)]_{W_C}\]
	so that $\rho(c') + W_C = \rho(x) + W_C$, meaning $\rho(c') \in \rho(x) + W_C$ and thus $\rho(c' - x) \in W_C$. Note $c' - x \in TC_\B$, so $c' - x \in \rho^{-1}(W_C) \cap TC_\B = W_\B$, i.e. $c' \in x + W_\B$ so $x \in \dep(C_\B)$.
\end{proof}

It is worth noting this result has a geometric interpretation: we can identify $C_\B$ with its image under $\rho$ (via injectivity), and then this result says the foliation $\mathcal F_\B$ of $C_\B^\circ$ by $W_\B$ is exactly the intersection with $C_\B^\circ$ of the foliation $\mathcal F_C$ of $C^\circ$ by $W_C$. Under this interpretation, collapsing $W_C$ automatically collapses $W_\B$, and thus \thref{corr:C'IsomorphicToC'_B} tells us something further of the geometry of $C_\B$. We claim \it{every} fiber $\mathcal F_\B$ intersects $C_\B^\circ$. If not, then some fiber must intersect part of the boundary of $C_\B$ which is not in $\dep(C_\B)$, but the fiber itself is in $\dep(C)$, contradicting the prior lemma. It thus makes sense to define a linear projection map $\pi_\B$ from $C$ onto $C_\B$ given by collapsing transverse directions of $W_C$, with $\pi_\B^{-1}(c') \cap C^\circ \neq \emptyset$ for every $c' \in C_\B^\circ$. Additionally, since $W_C = W_\B \oplus W'$ for some subspace $W'$ of $W_C$ which $\pi_\B$ exactly collapses to zero, with both $W_C$ and $W_\B$ rational linear, we conclude $W'$ is rational linear. This means that for any rational $c'$ carried by $C_\B^\circ$, we can find a rational $c$ carried by $C^\circ$ such that $\pi_\B(c) = c'$.

We next need to prove an analogue of Theorem 3.5 of \cite{dunfieldCountingEssentialSurfaces2022} for $C_\B$. 

\begin{lemma} \label{lemma:W_BProperties}
	Let $C$ be an lw-face, and let $F$ and $G$ be normal surfaces carried by $\dep(C_\B)$.
	\begin{enumerate}[itemsep=-0.1cm]
		\item $W_\B$ is rational linear
		\item $F \projsim G$ iff $f^* - g^* \in W_\B$
		\item If $F$ and $G$ are orientable, then $F \sim G$ iff $\vec f - \vec g \in W_\B$
		\item $W_\B \subseteq \ker(\wt)$
		\item If $F$ is carried by $C^\circ_\B$, then there exist surfaces $F_1, \ldots, F_n$ each projectively isotopic to $F$ and carried by $C^\circ_\B$ such that $f^* - f_i^*$ span $W_\B$.
	\end{enumerate}
\end{lemma}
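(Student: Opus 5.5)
The plan is to transfer each item from the corresponding property of $W_C$ (Theorem 3.5 and Corollary 3.7 of \cite{dunfieldCountingEssentialSurfaces2022}) through the injective linear map $\rho$. The two main tools are \thref{lemma:a-bInW_Biffrho(a-b)InW_C} and the identity $\dep(C_\B) = \rho^{-1}(\dep(C)) \cap C_\B$ proved just above. The common reduction is as follows. If $F$ and $G$ are carried by $\dep(C_\B)$ with weight systems $\vec f,\vec g$, then $\rho(\vec f)=\vec F$ and $\rho(\vec g)=\vec G$. Moreover $\rho(f^*) = F^*$, and $F^*\in\dep(C)$ by the previous lemma, so $F$ and $G$ are normal surfaces carried by $\dep(C)$, and all of Dunfield et al.'s statements apply to them.

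Item (1) was essentially observed when $W_\B$ was defined. The restriction of $\rho$ to $\aff(\ccone(C_\B))$ has a rational matrix and a rational inverse on its image, and $W_C$ is rational linear. Hence $W_\B=\rho^{-1}(W_C)\cap TC_\B$ is the preimage of a rational subspace under a rational map, intersected with the rational subspace $TC_\B$. I will just record this.

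Item (4) is immediate: $W_\B\subseteq TC_\B$, and every element of $TC_\B$ is a difference of two weight-one vectors by \thref{claim:ccone(B)=ccone(C_B)}. Alternatively, $\wt(w)=\wt(\rho(w))$ and $\rho(w)\in W_C\subseteq\ker\wt$.

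For item (2), $f^*,g^*\in C_\B\subseteq\ccone(C_\B)$, so \thref{lemma:a-bInW_Biffrho(a-b)InW_C} gives $f^*-g^*\in W_\B$ iff $\rho(f^*)-\rho(g^*) = F^*-G^*\in W_C$. By Theorem 3.5 of \cite{dunfieldCountingEssentialSurfaces2022} this holds iff $F\projsim G$.

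Item (3) is the same argument applied to $\vec f-\vec g$. Again $\vec f-\vec g\in W_\B$ iff $\vec F-\vec G\in W_C$. The orientable-isotopy version of Theorem 3.5 (Dunfield et al. state it for orientable surfaces in $\dep(C)$; compare \thref{claim:corr3.7Improved}) then says this holds iff $F\sim G$. The one care point is the forward direction. There I will note that $\vec F-\vec G\in W_C\subseteq\ker\wt$ forces $\wt F=\wt G$, so $F^*-G^*\in W_C$ and $F\projsim G$. For orientable lw-surfaces, projective isotopy with equal weight upgrades to isotopy, by the cited theorem.

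Item (5) is where I expect the main work. The difficulty is that Dunfield et al. give surfaces $F_i$ carried by $C^\circ$ spanning $W_C$, but these need not be carried by $C_\B$. I will instead construct them inside $C_\B^\circ$. The pulled-back foliation makes $(f^*+W_\B)\cap C_\B^\circ$ a relatively open subset of the affine space $f^*+W_\B$. Since $W_\B$ is rational by (1) and $f^*$ is rational, I will choose rational points $f^*+w_1,\dots,f^*+w_n$ in this set, with $w_1,\dots,w_n$ a rational basis of $W_\B$ scaled small. Scaling each to be integral (and doubling, to get orientable surfaces) gives weight systems of surfaces $F_i$ carried by $C_\B^\circ$ with $f_i^* = f^*+w_i$. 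The $F_i$ lie in $\dep(C_\B)$, so item (2) gives $F_i\projsim F$. Then $f^*-f_i^*=-w_i$ span $W_\B$. The only subtlety is checking that the normalization $f_i^*$ of the integral scaling equals $f^*+w_i$. This holds because $f^*+w_i$ already has weight $1$ by (4).
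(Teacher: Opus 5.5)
Your proposal is correct. Items (1)--(4) follow the paper's proof. You transport Theorem~3.5 of \cite{dunfieldCountingEssentialSurfaces2022} through $\rho$ via the lemma that $a-b\in W_\B$ iff $\rho(a-b)\in W_C$. You are in fact more explicit than the paper in invoking $\dep(C_\B)=\rho^{-1}(\dep(C))\cap C_\B$ to justify applying that theorem. Your extra ``care point'' in (3) is harmless but redundant once the orientable clause of the cited theorem is used.

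Item (5) is where your route genuinely differs. The paper lifts $F^*$ to a rational point $c\in C^\circ$ using the projection $\pi_\B$, which is built from a rational splitting $W_C=\rho(W_\B)\oplus W'$. It then applies the spanning clause of Theorem~3.5 to get surfaces $G_i$ in $C^\circ$ with $G^*-G_i^*$ spanning $W_C$, and pushes these down by $\pi_\B$ using $\pi_\B(W_C)=\rho(W_\B)$. You instead show that (5) is a formal consequence of (1), (2), (4) and $W_\B\subseteq TC_\B$. The fiber $(f^*+W_\B)\cap C_\B^\circ$ is a nonempty relatively open subset of a rational affine space through the rational point $f^*$. So small rational basis vectors $w_i$ of $W_\B$ give weight-one rational points $f^*+w_i\in C_\B^\circ$, and (2) supplies the projective isotopies.

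Your argument is shorter and needs neither $\pi_\B$ nor the spanning clause of Theorem~3.5. It also avoids a point the paper passes over quickly. For the pushed-down surfaces one needs $\pi_\B(G_i^*)\in\rho(C_\B^\circ)$, and this is not automatic, since a linear projection along $W'$ need not carry $C$ into $\rho(C_\B)$. It is most easily secured by first replacing $G_i^*$ with $G^*+t(G_i^*-G^*)$ for small rational $t>0$. That is precisely the smallness you build in by scaling the $w_i$. What the paper's route buys is that the spanning surfaces are visibly images of surfaces carried by $C^\circ$, which the statement does not require.
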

\begin{proof}
	Most of these statements rather immediately follow from Theorem 3.5 of \cite{dunfieldCountingEssentialSurfaces2022}. That $W_\B$ is rational linear was already proved earlier. The next two conclusions have identical calculations:
	$f^* - g^* \in W_B$ iff $\rho(f^* - g^*) \in W_C$ iff $\rho(f^*) - \rho(g^*) \in W_C$ iff $F^* - G^* \in W_C$ iff $F \projsim G$, and in the orientable case $\vec f - \vec g \in W_B$ iff $\rho(\vec f - \vec g) \in W_C$ iff $\rho(\vec f) - \rho(\vec g) \in W_C$ iff $\vec F - \vec G \in W_C$ iff $F \sim G$. Note both of these bi-implication chains require an application of \thref{lemma:a-bInW_Biffrho(a-b)InW_C}. That $W_\B \subseteq \ker(\wt)$ follows from $W_C \subseteq \ker(\wt)$ and $\rho$ preserving weight. 

	The last statement is essentially taking the analogous statement for $W_C$ and pushing everything down via $\pi_\B$. Explicitly, if $F$ is carried with vector $\vec f$ by $C^\circ_\B$, then it is also carried by $\rho(C^\circ_\B)$ with vector $\vec F$, and we can choose some rational $c$ carried by $C^\circ$ with $\pi_\B(c) = F^*$. Scale $c$ to an integral $\vec G$ representing a surface $G$. By Theorem 3.5 of \cite{dunfieldCountingEssentialSurfaces2022} there exist surfaces $G_1, \ldots, G_n$ carried by $C^\circ$ such that $G \projsim G_i$ and $G^* - G^*_i$ span $W_C$. Then $\pi_\B(G^* - G^*_i) = \pi_\B(G^*) - \pi_\B(G^*_i)$ span $\pi_\B(W_C) = \rho(W_\B)$. We then note $\pi_\B(G^*) = \pi_\B(c) = F^*$, define $F_i$ to be an integral scaling of $\pi_\B(G^*_i)$, and note that since $F_i$ is carried by $\rho(C_\B^\circ)$, there exists a unique $\vec f_i  \in C_\B^\circ$ such that $\rho(\vec f_i) = \vec F_i$. We conclude $F^* - F_i^*$ span $\rho(W_\B)$, so $f^* - f_i^*$ span $W_\B$.
\end{proof}

As in normal solution space, we can allow $F$ and $G$ to wander outside $\dep C$ and still have the difference stay in $W_C$:

\begin{claim}\label{claim:corr3.7ImprovedC_B}
	If $F$ and $G$ are isotopic lw-surfaces carried by $C_\B$, then $\vec f - \vec g \in W_\B$.
\end{claim}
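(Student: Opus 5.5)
The plan is to push the statement forward along $\rho$, apply the normal-surface version \thref{claim:corr3.7Improved}, and pull back using \thref{lemma:a-bInW_Biffrho(a-b)InW_C}. This mirrors the pattern of the earlier proofs in this section, so no new geometric input should be needed.

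Let $F$ and $G$ be isotopic lw-surfaces carried by $C_\B$, with weight systems $\vec f, \vec g \in \ccone(C_\B)$. By the definition of $\rho$ we have $\rho(\vec f) = \vec F$ and $\rho(\vec g) = \vec G$. Since $\rho(\ccone(C_\B)) \subseteq \ccone(C)$, both $\vec F$ and $\vec G$ lie in $\ccone(C)$. Hence $F$ and $G$ are normal surfaces carried by the lw-face $C$; they are genuinely the same normal surfaces, because the construction of $\B$ identifies surfaces carried by $\B$ with normal surfaces, not merely up to isotopy.

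\begin{enumerate}
\item Since $F \sim G$, \thref{claim:corr3.7Improved} gives $\vec F - \vec G \in W_C$. The input needed is that $F$ and $G$ are isotopic and carried by the same lw-face, which holds. That claim rests on Corollary 3.7 of \cite{dunfieldCountingEssentialSurfaces2022}, which is stated for surfaces carried by $C$ and not only by $\dep(C)$; this is exactly why $F$ and $G$ may lie outside $\dep(C_\B)$ here.
\item By linearity, $\rho(\vec f - \vec g) = \vec F - \vec G \in W_C$.
\item Since $\vec f, \vec g \in \ccone(C_\B)$, the backward direction of \thref{lemma:a-bInW_Biffrho(a-b)InW_C} applies and yields $\vec f - \vec g \in W_\B$.
\end{enumerate}

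The only real obstacle is the first step, namely confirming that the hypotheses of \thref{claim:corr3.7Improved} hold for surfaces carried anywhere in $C$. Two inputs settle this: $F$ and $G$ being lw-surfaces gives $\wt F = \wt G$, and Corollary 3.7 gives the projective statement. The weight check inside \thref{lemma:a-bInW_Biffrho(a-b)InW_C} is routine: $\wt(\vec f) = \wt F = \wt G = \wt(\vec g)$, so $\vec f - \vec g \in TC_\B$.
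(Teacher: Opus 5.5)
Your proposal is correct and follows the paper's proof essentially verbatim. You note that $\vec F = \rho(\vec f)$ and $\vec G = \rho(\vec g)$ are carried by $C$, apply the normal-surface claim to get $\vec F - \vec G \in W_C$, and pull back to $\vec f - \vec g \in W_\B$ using the lemma that characterizes membership in $W_\B$ through $\rho$, whose proof already includes your weight check placing $\vec f - \vec g$ in $TC_\B$.
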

\begin{proof}
	Note $F$ and $G$ are also carried by $C$, with $\vec F = \rho(\vec f)$ and $\vec G = \rho(\vec g)$. By \thref{claim:corr3.7Improved} we know $\vec F - \vec G \in W_C$, so by \thref{lemma:a-bInW_Biffrho(a-b)InW_C} we conclude $\vec f - \vec g \in W_\B$.
\end{proof}
\section{Quotient Cones Exactly Kill Isotopies}

We now relate $W_C$ and $W_\B$ to isotopy relations in the cones $\ccone(C)$ and $\ccone(C_\B)$. Given two integral vectors $\vec F$ and $\vec G$ representing isotopic lw-surfaces $F$ and $G$, we say there is an \it{isotopy relation} between $\vec F$ and $\vec G$. We can \it{collapse} an isotopy relation by considering $\ccone(C) / \text{span}\{\vec F - \vec G\}$; in other words, we collapse the line between the vectors representing isotopic surfaces. We can likewise collapse multiple isotopy relations simultaneously. The same ideas all hold for $C_\B$, where we instead identify weights $\vec f$ and $\vec g$. Since $\ccone(C)$ and $\ccone(C_\B)$ have finite dimensions, by repeatedly collapsing along isotopy relations we must at some point end up with a cone without isotopy relations. When considering a cone collapsed along all its isotopy relations, we will notate it as the cone quotiented by $\sim$.

Note collapsing $\ccone(C)$ along all isotopy relations is the same as collapsing $\ccone(C)$ along all isotopy relations between orientable surfaces: indeed, if $F \sim G$ with one of $F, G$ non-orientable, we can instead consider $2F$ and $2G$, which are isotopic orientable surfaces. Quotienting along the isotopy relation between $2F$ and $2G$ identifies $\vec F$ and $\vec G$. We can likewise double weights in $\ccone(C_\B)$ to achieve orientability so that collapsing orientable isotopy relations collapses all isotopy relations. Via these observations we are able to bypass---at least in our current context---some issues with orientability that arise in \cite{dunfieldCountingEssentialSurfaces2022}.

All of the aforementioned quotients can also be carried out on subsets of $\ccone(C)$ and $\ccone(C_\B)$; of particular relevance are $\cone(C)$ and $\ccone(\dep(C))$, as well as their $C_\B$ analogues.

\begin{lemma}\label{lemma:equivDefsCollapsedConeC}
	$\cone(C') = \cone(C)/\!\!\sim$
\end{lemma}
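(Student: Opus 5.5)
By Corollary~\ref{cor:equivConeDefs} we have $\cone(C') = \cone(C)/W_C$. It therefore suffices to show that the subspace collapsed by $\sim$ on $\cone(C)$ is exactly $W_C$. Write $I_C$ for the linear span of all differences $\vec F - \vec G$ with $F, G$ isotopic orientable lw-surfaces carried by $\cone(C)$. As observed above, doubling reduces all isotopy relations to orientable ones, so collapsing along all isotopy relations is the same as quotienting by $I_C$. The goal is then the equality $I_C = W_C$, proved as two inclusions.

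\textbf{Inclusion $I_C \subseteq W_C$.} This is immediate from \thref{claim:corr3.7Improved}: any two isotopic normal surfaces carried by $C$ satisfy $\vec F - \vec G \in W_C$. Since $W_C$ is a vector space, it contains the whole span.

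\textbf{Inclusion $W_C \subseteq I_C$.}
\begin{enumerate}
\item Fix an integral point of $\cone(C)$. After doubling, it is $\vec F$ for an orientable lw-surface $F$ carried by $C^\circ$.
\item By Theorem~3.5 of \cite{dunfieldCountingEssentialSurfaces2022}, there are surfaces $F_1, \ldots, F_n$ carried by $C^\circ$, each projectively isotopic to $F$, such that the differences $F^* - F_i^*$ span $W_C$.
\item Each projective isotopy is upgraded to a genuine isotopy of multiples: there are integers $a_i, b_i$ with $a_i F \sim b_i F_i$, which after doubling we may take to be orientable.
\item Because both sides are lw-surfaces carried by $C$, they have equal weight. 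Hence $a_i \vec F - b_i \vec F_i = \wt(a_iF)\,(F^* - F_i^*)$.
\item The left side lies in $I_C$, and $\wt(a_i F) > 0$. So each $F^* - F_i^*$ lies in $I_C$, and therefore $W_C \subseteq I_C$.
\end{enumerate}

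\textbf{Main obstacle.} The delicate point is making sure the isotopic multiples $a_iF$ and $b_iF_i$ are legitimately \emph{carried by $\cone(C)$}, so that the relation is actually collapsed in $\cone(C)$ rather than only in some larger cone. This holds because multiples of points of the open cone stay in the open cone. A second point to check is that collapsing the relations one at a time, as in the definition of $\cone(C)/\!\!\sim$, produces the same result as quotienting by the span $I_C$ all at once; this is a finite-dimensional bookkeeping step.

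With $I_C = W_C$ established, Corollary~\ref{cor:equivConeDefs} gives $\cone(C)/\!\!\sim\; = \cone(C)/W_C = \cone(C')$.
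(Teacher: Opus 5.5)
Your proposal is correct and is essentially the paper's proof. The inclusion $I_C \subseteq W_C$ comes from the earlier claim that isotopic normal surfaces carried by $C$ differ by a vector in $W_C$, and your reverse inclusion is exactly the paper's ``alternate proof'': take the spanning family from Theorem~3.5 of \cite{dunfieldCountingEssentialSurfaces2022}, pass to isotopic orientable multiples carried by $C^\circ$, and use equal weights of isotopic lw-surfaces to turn each $F^* - F_i^*$ into a genuine isotopy relation (taking separate multiples $a_i, b_i$ for each $i$ rather than one simultaneous rescaling is only cosmetic).
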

\begin{proof}
	Let $W_C'$ denote the span of the one-dimensional spaces we quotient $\cone(C)$ by to collapse all isotopy relations, i.e. $\cone(C)/\!\!\sim \;= \cone(C)/W_C'$.

	By \thref{cor:equivConeDefs} it suffices to show $W'_C = W_C$. Let $V \subseteq W'_C$ be one of the  one-dimensional subspaces used to generate it. By definition of $W_C'$, there exist distinct, orientable, normal surfaces $F$ and $G$ carried by $C^\circ$ such that $F \sim G$ and $\vec F - \vec G \in V$. Since $F$ and $G$ are distinct we know $\vec F - \vec G$ generates $V$, but since $F \sim G$ with $F$ and $G$ being orientable and normal surfaces carried by $C$, \thref{claim:corr3.7Improved} implies $\vec F - \vec G \in W_C$. Thus $V \subseteq W_C$, so $W'_C \subseteq W_C$.

	Conversely, recall by Corollary 3.7 of \cite{dunfieldCountingEssentialSurfaces2022} that $W_C$ is spanned by $\vec G - \vec H$, where $H$ is carried by $C$, $F$ is carried by $C^\circ$, and $G$ is a component of $F$ with $G \sim H$. Let $V \subseteq W_C$ be a one-dimensional subspace generated by such a $\vec G - \vec H$. The problem we have is that $G$ or $H$ could lie on the boundary of $C$, so $V$ is not obviously generated by a difference of elements of $C^\circ$. To show this, let $\vec F' \coloneqq \vec F + \tfrac{1}{n} (\vec G - \vec H)$, where $n \in \N$ is chosen large enough so $F' \in C^\circ$. Since $G \sim H$ we have $\vec G - \vec H \in \ker(\wt)$, so $\wt(F') = \wt(F)$. Then $\vec{F''} \coloneqq \vec{2nF'} = 2nF + 2(\vec G - \vec H)$ represents an orientable, normal surface $F''$ carried by $C^\circ$, and $\vec{F''} - \vec{2n F} = 2(\vec G - \vec H)$ generates $V$.

	As an alternate proof of the converse that will be useful shortly: recall Theorem 3.5 of \cite{dunfieldCountingEssentialSurfaces2022} shows $W_C$ is spanned by $F_1^* - F_i^*$  for some surfaces $F_1, \ldots, F_n$ each carried by $C^\circ$ with $F_i \projsim F_1$. By scaling each $F_i$ simultaneously (and doubling as necessary to ensure everything is orientable), we get orientable surfaces $G_1, \ldots, G_n$ carried by $C^\circ$ such that $G_i^* = F_i^*$, $G_1^* - G_i^*$ span $W_C$, and $G_i \sim G_1$. Since $G_i$ and $G_1$ are isotopic they have the same weight, so in fact $\vec G_1 - \vec G_i$ span $W_C$. Each of these is an isotopy relation, so $W_C \subseteq W_C'$.
\end{proof}

One may wonder whether the same holds for $\ccone(\dep(C))$. In fact, we get nothing new: $\dep(C)/W_C \cong C^\circ / W_C$ is geometrically clear by definition of $\dep(C)$, and by writing $\dep(C)$ as a union of closed and open faces and applying \thref{claim:quotientsCommuteWithConvexCone} to the cone over each piece and gluing, we have $\ccone(\dep(C))/W_C = \ccone(\dep(C)/W_C)$, whence
\[\ccone(\dep(C))/W_C = \ccone(\dep(C)/W_C) \cong \ccone(C^\circ / W_C) = \ccone(C^\circ)/W_C,\] so the only difference between this and $\cone(C')$ is whether we include the origin or not (which is not isotopic to anything else, so is clearly inconsequential for current purposes).

The following proof is identical to the prior one with certain references to \cite{dunfieldCountingEssentialSurfaces2022} replaced by references to analogous statements for $C_\B$ proven prior in this paper, and the reader can safely skip to the next section.

\begin{lemma}\label{lemma:equivDefsCollapsedConeC_B}
	$\cone(C'_\B) = \cone(C_\B)/\!\!\sim$
\end{lemma}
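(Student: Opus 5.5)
The plan mirrors the proof of \thref{lemma:equivDefsCollapsedConeC}, working entirely inside $\ccone(C_\B)$. Let $W_\B'$ be the span of the one-dimensional subspaces generated by differences $\vec f - \vec g$, where $\vec f,\vec g \in \cone(C_\B)$ are integral weights carrying isotopic, distinct, orientable surfaces. Then $\cone(C_\B)/\!\!\sim\; = \cone(C_\B)/W_\B'$. By \thref{cor:equivConeDefs} we have $\cone(C'_\B) = \cone(C_\B)/W_\B$, so it suffices to prove $W_\B' = W_\B$.

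For $W_\B' \subseteq W_\B$, take a generator $\vec f - \vec g$ of one of these lines. The surfaces $F$ and $G$ are normal surfaces carried by $C_\B$, hence carried by $C$ and therefore lw-surfaces, and they are isotopic. So \thref{claim:corr3.7ImprovedC_B} gives $\vec f - \vec g \in W_\B$ directly. (Alternatively, since both lie in $C_\B^\circ \subseteq \dep(C_\B)$, part 3 of \thref{lemma:W_BProperties} gives the same conclusion.)

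For $W_\B \subseteq W_\B'$, I would use the ``alternate proof'' route, with part 5 of \thref{lemma:W_BProperties} in place of Theorem 3.5 of \cite{dunfieldCountingEssentialSurfaces2022}.
\begin{enumerate}
\item Since $C_\B^\circ$ is nonempty and has rational points (its image under $\rho$ is dense in rationals of a face), fix a surface $F$ carried by $C_\B^\circ$.
\item Part 5 of \thref{lemma:W_BProperties} provides $F_1,\dots,F_n$ carried by $C_\B^\circ$, each projectively isotopic to $F$, such that the $f^* - f_i^*$ span $W_\B$.
\item Scale all of these simultaneously by a common integer, doubling if needed for orientability. This gives orientable surfaces $G, G_i$ carried by $C_\B^\circ$ with $g^* = f^*$, $g_i^* = f_i^*$, and $G_i \sim G$. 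Here projective isotopy is upgraded to honest isotopy by choosing multiples appropriately, as in the $C$ case.
\item Isotopic lw-surfaces have equal weight, so $\vec g - \vec g_i$ is a constant multiple of $f^* - f_i^*$. Hence these differences span $W_\B$, and each is an isotopy relation inside $\cone(C_\B)$. This gives $W_\B \subseteq W_\B'$.
\end{enumerate}

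The main obstacle is step 3: producing genuinely isotopic orientable representatives, not merely projectively isotopic ones, that lie in $C_\B^\circ$ with integral weights. The fix is that projective isotopy means some multiples $a G \sim b G_i$. Weight equality then forces the normalized vectors to coincide, so choosing the common multiple by clearing denominators of the $f_i^*$ (and doubling) works. Membership in $C_\B^\circ$ is preserved under positive scaling, so nothing leaves the open cone.
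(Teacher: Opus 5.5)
Your proposal is correct and follows the paper's proof essentially step for step: you reduce via \thref{cor:equivConeDefs} to showing $W_\B' = W_\B$, get $W_\B' \subseteq W_\B$ from \thref{claim:corr3.7ImprovedC_B}, and get $W_\B \subseteq W_\B'$ from part 5 of \thref{lemma:W_BProperties} together with simultaneous scaling and doubling and the fact that isotopic lw-surfaces have equal weight. The one point to tighten is the scaling factor in step 3: choose it as a common multiple of the weights of the isotopic multiples $aF \sim b_iF_i$, not merely an integer that clears the denominators of the $f_i^*$, so that the scaled surfaces inherit the isotopies.
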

\begin{proof}
	Let $W_\B'$ denote the span of the one-dimensional spaces we quotient $\cone(C_\B)$ by to collapse all isotopy relations, i.e. $\cone(C_\B)/\!\!\sim \;= \cone(C_\B)/W_\B'$.

	By \thref{cor:equivConeDefs} it suffices to show $W'_\B = W_\B$. Let $V \subseteq W'_\B$ be one of the  one-dimensional subspaces used to generate it. By definition of $W_\B'$, there exist distinct, orientable, normal surfaces $F$ and $G$ carried by $C_\B^\circ$ such that $F \sim G$ and $\vec f - \vec g \in V$. Since $F$ and $G$ are distinct we know $\vec f - \vec g$ generates $V$, but since $F \sim G$ with $F$ and $G$ being orientable and normal surfaces carried by $C_\B$, \thref{claim:corr3.7ImprovedC_B} implies $\vec f - \vec g \in W_\B$. Thus $V \subseteq W_\B$, so $W'_\B \subseteq W_\B$.

	Conversely, recall that by \thref{lemma:W_BProperties} we know $W_\B$ is spanned by $f_1^* - f_i^*$  for some surfaces $F_1, \ldots, F_n$ each carried by $C^\circ_\B$ with $F_i \projsim F_1$. By scaling each $F_i$ simultaneously (and doubling as necessary to ensure everything is orientable), we get orientable surfaces $G_1, \ldots, G_n$ carried by $C^\circ_\B$ such that $G_i^* = F_i^*$, $G_1^* - G_i^*$ span $W_\B$, and $G_i \sim G_1$. Since $G_i$ and $G_1$ are isotopic they have the same weight, so in fact $\vec G_1 - \vec G_i$ span $W_\B$. Each of these is an isotopy relation, so $W_\B \subseteq W_\B'$.
\end{proof}

\section[Connecting Cones to ML]{Connecting Cones to $\ML$}

In 1988, Oertel proved the following result first connecting cones to $\ML$:

\begin{reference}[(Theorem 3.5, Lemma 3.6, and Theorem 1.7 in \cite{oertelMeasuredLaminations3Manifolds1988})]
	Let $\B_1, \ldots, \B_n$ be finitely many TIBs without isotopy relations carrying all essential surfaces in $M$. Let $\mathcal H$ be the set of nontrivial homotopy classes of closed curves in $M$.
	\begin{enumerate}[itemsep=0cm]
		\item The maps $\phi_i \colon \cone(\B_i) \to \R^{\mathcal H}$ whose coordinates are length functions admit unique continuous extensions $\widebar \phi_i \colon \ccone(\B_i) \to \R^{\mathcal H}$.
		\item The maps $\widebar \phi_i$ are topological embeddings.
		\item $\ML$, viewed as a subset of $\R^{\mathcal H}$ with the subspace topology, is contained in the union of the images of the $\widebar \phi_i$, and contains the interiors of these images.
	\end{enumerate}
\end{reference}

As a corollary to (3),

\begin{corollary} \label{corr:OertelMLWork}

	$\dim \ML = \max_i \dim \image(\phi_i) = \max_i \dim \cone(\B_i) = \dim \cone(\B)$\\for some $\B \in \{\B_1, \ldots, \B_n\}$.
	
\end{corollary}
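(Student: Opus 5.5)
The corollary asserts three equalities, and I will prove them in order, working only with the dimension conventions already fixed in the introduction. The underlying principle is that every set involved is a finite union of pieces, each homeomorphic to a convex set. For such unions, the dimension is the maximum of the dimensions of the pieces, and topological embeddings preserve it (invariance of domain).

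\emph{First equality.} Part (3) of Oertel's result squeezes $\ML$ between two sets:
\[\bigcup_i \image(\phi_i) \;\subseteq\; \ML \;\subseteq\; \bigcup_i \image(\widebar\phi_i).\]
Here I use that the interior of $\image(\widebar\phi_i)$ is $\widebar\phi_i(\cone(\B_i))$. This holds because $\widebar\phi_i$ is an embedding by part (2) and $\phi_i$ is its restriction to the open cone.

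Each $\image(\widebar\phi_i)$ is homeomorphic to the closed cone $\ccone(\B_i)$, which is a finite-sided polyhedral cone. Its dimension therefore equals that of its relative interior $\cone(\B_i)$, because a convex set and its relative interior have the same affine dimension. So the upper bound $\dim\ML \le \max_i\dim\image(\widebar\phi_i) = \max_i \dim\image(\phi_i)$ follows from monotonicity of topological dimension under inclusion together with the finite union (sum) theorem for closed sets. The lower bound follows from monotonicity applied to $\image(\phi_i)\subseteq\ML$.

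If one prefers to avoid topological dimension theory altogether, this step is simply the definition $\dim\ML \coloneqq \max_i \dim\cone(\B_i)$ given in the introduction. In that reading I only need to check that this definition is consistent with the images $\phi_i$, which is exactly the argument above.

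\emph{Second equality.} The map $\phi_i$ is an embedding of $\cone(\B_i)$, a relatively open convex set of affine dimension $d_i$, onto its image. So $\image(\phi_i)$ is homeomorphic to an open subset of $\R^{d_i}$, and by invariance of domain it has dimension exactly $d_i = \dim\cone(\B_i)$.

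\emph{Third equality.} This is immediate: a maximum over a finite index set is attained, so $\max_i \dim\cone(\B_i) = \dim\cone(\B)$ for some $\B$ among $\B_1,\dots,\B_n$.

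\emph{Main obstacle.} The only genuinely delicate point is the upper bound in the first equality, namely that adding frontier strata of the closed images cannot raise the dimension. This is handled by noting that $\ccone(\B_i)\setminus\cone(\B_i)$ is a finite union of lower-dimensional faces. Hence each closed image is a finite union of cells of dimension at most $d_i$, and the sum theorem applies.
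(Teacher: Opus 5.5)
Your argument is correct and is essentially the paper's. The paper simply cites part (3) of Oertel's theorem, i.e.\ the sandwich $\bigcup_i \image(\phi_i) \subseteq \ML \subseteq \bigcup_i \image(\widebar\phi_i)$ (which is also how $\dim \ML$ is defined in the introduction), and then takes the maximum over the finite family, exactly as you do. The one point you leave implicit is that the sum theorem needs closed pieces, but this is harmless: $\mathcal H$ is countable, so $\R^{\mathcal H}$ is separable metrizable, and each $\image(\widebar\phi_i)$ is the countable union of the compact (hence closed) sets $\widebar\phi_i(\ccone(\B_i)\cap\{\|w\|\le k\})$, so the countable closed sum theorem applies.
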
 

We now prove a general result about slicing polytopes that we will utilize as a density result for a complete lw-face. Recall a face $B$ of $C$ is said to be \it{filling} (with respect to some affine space $W$) if $TC = TB + W$.

\begin{lemma}\label{lem:parallelSlicesOfConvex}
	Let $C$ be a convex polytope in $\aff(C) = \R^n$, and $W$ be a subspace of $\R^n$. Then a face $B$ of $C$ is filling iff the space of translates of $W$ intersecting $B$ is of full possible dimension, namely $\dim(\R^n / W) = \reldim C$.
\end{lemma}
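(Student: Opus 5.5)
The plan is to identify ``the space of translates of $W$ intersecting $B$'' with the image $q(B)$, where $q\colon \R^n \to \R^n/W$ is the quotient map. A translate $x + W$ meets $B$ exactly when $[x] = q(b)$ for some $b \in B$, so this space is $q(B)$. Its dimension, as a convex set, is the dimension of its affine hull. Since $q$ is linear and affine hulls commute with linear maps, that affine hull is $q(\aff B)$. Its tangent space is
\[
  T\,q(B) = q(TB) = (TB + W)/W,
\]
and hence
\[
  \dim q(B) = \dim(TB + W) - \dim W.
\]

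Next I would check that $\dim(\R^n/W) = \reldim C$. Since $\aff(C) = \R^n$, we have $TC = \R^n$ after translating so that $C$ meets the origin, and so $\dim C = n$. As in the setting of the paper, I take $W \subseteq TC$ (automatic here, since $TC = \R^n$). Then
\[
  \dim(\R^n/W) = n - \dim W = \dim C - \dim W = \reldim C,
\]
where $\reldim C$ is read with $W$ in place of $W_C$. Moreover $q(B) \subseteq \R^n/W$, so $\dim q(B) \le \dim(\R^n/W)$. This justifies calling $\dim(\R^n/W)$ the ``full possible dimension''.

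The equivalence then follows from a dimension count. Since $TB \subseteq TC$ and $W \subseteq TC$, we have $TB + W \subseteq TC$. By the formula for $\dim q(B)$, the equality $\dim q(B) = \dim TC - \dim W$ holds iff $\dim(TB + W) = \dim TC$. Because $TB + W$ is a subspace of $TC$, equal dimension means $TB + W = TC$, which is exactly the condition that $B$ is filling. This is the same computation as in Corollary~\ref{cor:reldimMaxOnComplete}.

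The only real obstacle is bookkeeping. I need to be careful that ``dimension of the space of translates'' means the dimension of $q(B)$ as a convex subset of $\R^n/W$. I also need to keep separate the affine set $\aff B$ and its tangent space $TB$, using that $q(\aff B) = q(b_0) + q(TB)$ for any $b_0 \in B$. I would state this identification explicitly at the start, and the rest is linear algebra.
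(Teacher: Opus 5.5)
Your proposal is correct and follows essentially the same route as the paper: identify the space of translates with the image of $B$ under the quotient map, compute its dimension as $\dim(TB+W) - \dim W$, and compare this with $\dim C - \dim W$. The differences are cosmetic. You get the dimension formula from $q(TB) = (TB+W)/W$ rather than from the inclusion--exclusion $\dim TB - \dim(TB\cap W)$. You also state explicitly the containment $TB + W \subseteq TC$, which the paper uses without comment to pass from equal dimensions to $TB + W = TC$.
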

\begin{proof}
	Let $\pi \colon \R^n \to \R^n / W$ be the quotient map. First note the set of translates $x + W$ intersecting $B$ (for $x \in \R^n$) is exactly $\pi(B)$; indeed, if $x + W$ meets $B$ then there exists $b \in (x+W)\cap B$ so $\pi(x) = \pi(b) \in \pi(B)$. Conversely, if $\pi(x) \in \pi(B)$ then some $b \in B$ satisfies $\pi(x) = \pi(b)$, so $b - x \in W$ so $b \in x + W$ so $(x+W)\cap B \neq \emptyset$. Also note $\pi(B)$ is a convex polytope since $B$ is. We have \[\dim \pi(B) = \dim TB - \dim (TB \cap W) = \dim (TB + W) - \dim W\] (via inclusion-exclusion), so $\dim \pi(B) = \reldim C$ iff $\dim (TB + W) = \dim C$, i.e.  $\dim \pi(B) = \reldim C$ iff $TB + W = TC$.
\end{proof}

Note the faces of codimension $\le 1$ whose interiors are in $\dep C$ must automatically be filling, since $W$ provides at least one transverse direction toward $C^\circ$. The same need not be true of codimension $2$ faces: indeed, if $C$ is a square and $W$ is a line not parallel to a side of $C$, then some translate of $W$ passes through a vertex $v$ of $C$ in $\dep(C)$, and this vertex is not filling since $\dim(Tv+W) = 1 < 2 = \dim(C)$. However, there are at most two such translates of $W$ which intersect $C^\circ$ and are problematic: every other translate intersects $\del C$ only at the interior of a filling boundary edge. It is worth noting it is not sufficient to have $\dim B + \dim W \ge \dim C$ for $B$ to be filling, as can be seen by taking the product of everything in the prior example with a closed interval. Also note that this implies that almost every translate of $W$ intersecting $C$ does so only at filling faces: the set of translates of $W$ intersecting $C$ at non-filling faces is a union of proper polyhedra, so the complement is open and dense in the space of translates intersecting $C$ (indeed, the above proof essentially shows intersection with a filling face is an open condition). In particular, we get the following:

\begin{corollary}
	Almost every translate of $W$ intersecting $C$ does so only at filling faces. Almost every rational point in $C$ lies on some translate of $W$ intersecting only filling faces.
\end{corollary}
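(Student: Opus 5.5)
We work in the quotient $\pi \colon \R^n \to \R^n/W$ from the proof of \thref{lem:parallelSlicesOfConvex}. There, translates of $W$ meeting $C$ are identified with points of the convex polytope $\pi(C)$, which has dimension $\dim(TC+W)-\dim W$. We may assume $W \subseteq TC$, as holds for $W = W_C$, so this dimension equals $\reldim C$. A translate $x+W$ meets some non-filling face $B$ exactly when $\pi(x) \in \pi(B)$. Hence the set of ``bad'' translates is $\bigcup_{B \text{ non-filling}} \pi(B)$, a finite union because $C$ has finitely many faces. By the dimension count in \thref{lem:parallelSlicesOfConvex}, each non-filling $B$ satisfies $\dim \pi(B) = \dim(TB+W)-\dim W < \reldim C = \dim \pi(C)$. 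So each $\pi(B)$ is a convex polytope of strictly lower dimension than $\pi(C)$. A finite union of such polytopes is closed and nowhere dense in $\pi(C)$, and it has Lebesgue measure zero in $\aff(\pi(C))$. Its complement in $\pi(C)$ is therefore open and dense and has full measure. This proves the first sentence in either the topological or the measure-theoretic reading of ``almost every''.

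For the second sentence, we pass from translates back to points of $C$. A point $c \in C$ lies on a translate meeting only filling faces iff $\pi(c) \notin \bigcup_{B \text{ non-filling}} \pi(B)$. The bad set of points is therefore $C \cap \pi^{-1}\bigl(\bigcup \pi(B)\bigr) = \bigcup_{B}\bigl(C \cap (B + W)\bigr)$. Each piece $C \cap (B+W)$ lies in the affine subspace $\aff(B)+W$, whose dimension $\dim(TB+W)$ is strictly less than $\dim C$ when $B$ is non-filling. So the bad set of points is contained in a finite union of proper affine subspaces of $\aff(C)$. Such a union is closed and nowhere dense in $C$, and so are its intersections with the rational points of $C$.

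It remains to interpret ``almost every rational point''. When $W$ is rational, as $W_C$ is by Theorem 3.5 of \cite{dunfieldCountingEssentialSurfaces2022}, and $C$ has rational vertices, each $\aff(B)+W$ is a proper rational affine subspace. The natural statement is then a counting one. Among the rational points of $C$ with denominator dividing $N$ (equivalently, lattice points of $N\cdot\ccone(C)$ at weight $N$), those lying in a fixed proper rational affine subspace number $O(N^{\dim C - 1})$, while the total is of order $N^{\dim C}$. Hence the bad proportion tends to $0$. The rational points are also dense in $C$, because the vertices are rational and the weight-$1$ slice is rational, as the paper notes when discussing density. Together these show that the good rational points are dense and of density one.

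The main obstacle is this last step: fixing a precise meaning of ``almost every'' for a countable set of rational points. The density-one lattice-count interpretation is the one needed later for counting surfaces. Its justification is the standard Ehrhart-type bound that lattice points in a lower-dimensional rational polytope grow with a strictly smaller exponent, and I would invoke that bound rather than reprove it.
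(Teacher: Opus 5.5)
Your proof is correct and follows the paper's route. The translates meeting a non-filling face form the finite union $\bigcup_B \pi(B)$ of polytopes of dimension less than $\reldim C$, so the good translates are open and dense in $\pi(C)$, and openness of the good set together with density of rational points in $C$ gives the second claim (your pullback of the bad set into the proper affine subspaces $\aff(B)+W$ makes this explicit).

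The lattice-counting refinement goes beyond what the paper uses, since the paper only needs one good rational point in $C^\circ$. If you keep it, count lattice points of $\ccone(C)$ of weight at most $N$ rather than exactly $N$: the exact-weight count is $0$ whenever no lattice point of $\ccone(C)$ has weight $N$, so ``of order $N^{\dim C}$'' can fail.
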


The latter claim in the corollary follows from the density of rationals along with the openness of the intersecting condition. In particular, there must exist some rational point $x$ in $C^\circ$ such that $x + W$ intersects only filling faces.

To prepare for using this, let $S$ be some lw-surface carried positively by a lw-face $C$. Recounting the techniques of Oertel in his proof of Theorem 4 in \cite{oertelIncompressibleBranchedSurfaces1984}, we use the fact $S$ is least-weight to recursively remove product regions. Indeed, if $S$ is carried by $\B_S$ in such a way that $\B_S$ admits a trivial $I$-bundle region, then we can isotope all sheets of $S_1 \coloneqq S$ to one side while preserving being least weight (otherwise all sheets of $S$ would already be pushed to the side of less weight). This yields an isotopic $S_2$, and we note $\B_{S_2}$ is isotopic to the sub-branched surface of $\B_S$ we get from deleting the now unused sectors. The only time this doesn't simplify things is when $S$ is a union of fibers in a global fibration over $S^1$, in which case fibers on the ``top'' and ``bottom'' can be interchanged by monodromy; we simply ignore such product regions. We repeat this process with $S_2$ in place of $S_1$, and iterate. Either via Oertel's argument or by noting this process recursively pushes $S$ into open faces of $\dep(C)$ of increasingly small dimension, we conclude this process terminates in finite time to yield some $S_n$ such that $S \sim S_n$ and $\B_{S_n}$ is a TIB without isotopy relations (by \cite{oertelMeasuredLaminations3Manifolds1988}) carrying $S_n$---and thus $S$---positively up to isotopy.

Importantly, at each step $k$ of this process, $S_k \sim S$, so $S_k$ and $S$ must lie on the same translate of $W_C$ (and $S_k$ must be in $\dep(C)$). By our prior lemma, we can always find a rational point in $C^\circ$---and thus a surface $S$ carried positively by $C$, by rescaling---such that $S + W_C$ intersects $C$ only along compatible faces. We conclude $S_n$, wherever it lands, does so in a compatible lw-face of $C$. 

As a consequence of this and the prior results on cone dimensions, note that for every complete face $C$, there exists a surface $S$ carried positively by $C$ such that $\B = \B_{S_n}$ is without isotopy relations and $S_n$ is positively carried by a compatible face $B \subseteq C$. This means $W_\B = 0$, so $\cone(\B) \cong \cone(B)/W_B$, i.e. $\dim \cone(\B) = \dim(\cone(B)/W_B) = \reldim B + 1 = \reldim C + 1$. This is the largest possible dimension of a cone we could get from a surface carried by $\dep(C)$; indeed, \it{any} such $S'$ is isotopic to some $S'_n$ inside the interior of subface $D$ of $C$ after running Oertel's construction, and $\dim \cone(\B_{S'}) = \reldim D + 1 \le \reldim C + 1$ by \thref{cor:reldimMaxOnComplete}. Since each lw-surface is carried by $\dep C$ for some unique complete face $C$ and the branched surfaces generated by such surfaces are exactly Oertel's TIBs, we conclude the TIB of maximal dimension must in fact be the relative dimension of the cone over some complete face. By \thref{corr:OertelMLWork} this mean $\dim \ML = \dim C - \dim W_C + 1$ for some complete face $C$. 

Recall a lw-face $C$ is said to be \it{essential} if it carries only essential surfaces. Note we have nowhere chosen a triangulation for any of the prior proofs, and everything works equally well in the case of closed surfaces for both material and ideal triangulations. Via Lemma 4.10 of \cite{dunfieldCountingEssentialSurfaces2022}, we can choose $\tau$ to be a $\del$-efficient ideal triangulation to force a face to be essential iff it carries no $\del$-parallel normal surfaces; equivalently, a face is essential iff it carries a \it{single} essential surface. As remarked in Proposition 4.7 of \cite{oertelMeasuredLaminations3Manifolds1988}, Oertel's TIBs carry no $\partial$-parallel surfaces. As a consequence, given some complete face $C$ and an lw-surface $S$ positively carried, the normal surface $S_n$ we get must be essential since it is normally carried by the TIB $\B_{S_n}$, so that $C$ is essential. As a consequence to this and the prior paragraph,

\begin{thm}\label{thm:mainResult}
Let $M$ be a compact orientable irreducible $\del$-irreducible 3-manifold. Then:
	{
	\begin{enumerate}[itemsep=0cm,leftmargin=2em,font=\upshape]
		\item If $\tau$ is a material triangulation of $M$, then $\dim( \ML)$ is the \\maximum of $\dim C - \dim W_C + 1$ among complete lw-faces $C$.
		\item If $\tau$ is an ideal triangulation of $M$, then $\dim( \ML_0)$ is the \\maximum of $\dim C - \dim W_C + 1$ among essential lw-faces $C$.
	\end{enumerate}}
\end{thm}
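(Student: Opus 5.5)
The proof assembles the two inequalities of the equality $\dim \ML = \max_C (\reldim C + 1)$ from results already in hand, then specializes to the ideal case. Throughout, Corollary~\ref{corr:OertelMLWork} will be the bridge: $\dim \ML$ equals $\dim \cone(\B)$ for some TIB $\B$ without isotopy relations in Oertel's finite collection.

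\textbf{Upper bound.} First I would show $\dim \ML \le \max_C(\reldim C + 1)$ over complete lw-faces $C$.
\begin{enumerate}[itemsep=0cm]
\item Take any TIB $\B$ from Oertel's collection. A positively carried surface for it, made least weight and orientable by doubling, is an lw-surface $S'$.
\item By Tollefson's Theorem 4.5 and the minimal completion, $S'$ lies in $\dep(C)$ for a unique complete face $C$.
\item Running the product-region removal, $S'$ is replaced by an isotopic $S'_n$ carried positively by some face $D \subseteq C$ with $D^\circ \subseteq \dep(C)$. Its branched surface $\B_{S'_n}$ has no isotopy relations.
\item Since $W_{\B}=0$ in that case, the linear homeomorphism theorem for $\rho'$ gives $\dim \cone(\B_{S'_n}) = \dim(\cone(D)/W_D) = \reldim D + 1$.
\item By the lemma that relative dimension is at most that of the completion, $\reldim D + 1 \le \reldim C + 1$.
\end{enumerate}
One point needs care: that Oertel's $\B$ and $\B_{S'_n}$ have the same cone dimension. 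I would handle it by choosing Oertel's collection to consist of the branched surfaces $\B_{S_n}$ themselves, which Oertel's Theorem 4.1 construction permits.

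\textbf{Lower bound.} Next I would show $\dim \ML \ge \reldim C + 1$ for every complete $C$.
\begin{enumerate}[itemsep=0cm]
\item By Lemma~\ref{lem:parallelSlicesOfConvex} and its corollary, pick a rational $x \in C^\circ$ such that $x + W_C$ meets only $C$-filling faces, and scale it to an lw-surface $S$ carried positively by $C$.
\item Run Oertel's reduction; every $S_k \sim S$, so each stays on $x + W_C$. The terminal $S_n$ is therefore positively carried by a filling face $B$.
\item By Corollary~\ref{cor:reldimMaxOnComplete}, $\reldim B = \reldim C$. Because $\B_{S_n}$ has no isotopy relations, $\dim\cone(\B_{S_n}) = \reldim C + 1$.
\item Since $\B_{S_n}$ is a TIB, Oertel's theorem (part 3) embeds the interior of its cone in $\ML$, giving the inequality.
\end{enumerate}

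The main obstacle is the middle step of the lower bound: justifying that the reduction moves $S$ only within $\dep(C) \cap (x+W_C)$, landing in the relative interior of some face. This requires identifying $B$ as the carrier of $S_n$ and using that compatibility holds for dependent faces (the first lemma of the minimal-completions section). The filling property is then exactly what feeds into Corollary~\ref{cor:reldimMaxOnComplete}.

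\textbf{Ideal triangulation.} For part (2) I would run the same argument restricted to closed surfaces, so everything concerns $\ML_0$. I would take $\tau$ to be $\del$-efficient and apply Lemma 4.10 of \cite{dunfieldCountingEssentialSurfaces2022}: a face is essential iff it carries a single essential surface. On one side, any complete face realizing the maximum carries $S_n$, which is essential because TIBs carry no $\del$-parallel surfaces (Oertel, Proposition 4.7); so the maximizing face is essential. On the other side, any essential face $D$ has $\reldim D \le \reldim \widebar D$. The remaining check is that $\widebar D$ is essential, or at least that its relative dimension is realized by a closed TIB; I expect this follows since $\widebar D$ carries the essential surfaces of $D^\circ$ and hence is essential by the same lemma.
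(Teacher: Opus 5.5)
Your proposal is essentially the paper's own proof. It uses the same four ingredients:
\begin{itemize}
\item the upper bound via Oertel's identification of $\dim\ML$ with the largest TIB cone;
\item the product-region reduction to $S_n$, giving $\dim\cone(\B_{S_n})=\reldim D+1\le\reldim\widebar D+1$;
\item the lower bound via a rational point of $C^\circ$ whose $W_C$-translate meets only filling faces;
\item the ideal case via Lemma 4.10 of \cite{dunfieldCountingEssentialSurfaces2022} together with Oertel's Proposition 4.7.
\end{itemize}
The two points you single out for care are exactly the ones the paper handles implicitly. Taking Oertel's collection to be the $\B_{S_n}$ themselves corresponds to the paper's assertion that these are Oertel's TIBs, and checking that completions of essential faces are essential corresponds to its assertion that every complete face is essential.
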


This resolves Conjecture 1.12 of \cite{dunfieldCountingEssentialSurfaces2022} as stated in the introduction, showing the number of closed, orientable surfaces of Euler characteristic at most $-2n$ in a 3-manifold as stated in the conjecture is asymptotic to $c n^{\ML_0}$. Note the authors further conjectured a statement on the constant $c$ in terms of an analogue of the Thurston measure, and this remains open.

It is natural to suppose such a statement should also hold for surfaces which are non-orientable or have boundary; in the former case one runs into issues with not all non-orientable surfaces being carried by least weight faces, and even restricting to orientable surfaces in the latter case requires care. 
\pagebreak
\section{Appendix}
\begin{claim}\label{claim:injectivityResult}
 Suppose $f \colon \R^n \to \R^m$ is linear, $\ker f$ is a rational linear subspace, $A \subseteq \R^n$, $A$ is open, and $f$ is injective on the rational points in $A$. Then $f$ is injective on $A$.
\end{claim}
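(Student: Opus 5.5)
The plan is a contradiction argument: a failure of injectivity on $A$ gives a nonzero kernel vector, and we move that failure onto rational points of $A$. Suppose $a, b \in A$ with $a \neq b$ and $f(a) = f(b)$. By linearity $d \coloneqq b - a$ is a nonzero element of $\ker f$. The aim is to produce two distinct rational points of $A$ with the same image under $f$, contradicting the hypothesis.

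The key input is the rationality of $\ker f$. Since $\ker f$ is a rational linear subspace, it has a basis $v_1, \ldots, v_k$ of vectors in $\Q^n$. Hence $\ker f \cap \Q^n$ consists of the rational combinations of the $v_j$, and it is dense in $\ker f$. In particular, for any $\varepsilon > 0$ there is a rational $d' \in \ker f$ with $|d' - d| < \varepsilon$. Choosing $\varepsilon < |d|$ guarantees $d' \neq 0$.

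Next we use openness of $A$ twice. Choose $\varepsilon > 0$ such that the $\varepsilon$-balls around $a$ and around $b$ lie in $A$. By density of $\Q^n$, pick a rational $q$ with $|q - a| < \varepsilon/2$, so $q \in A$. Then pick a rational $d' \in \ker f$ with $|d' - d| < \varepsilon/2$, and also $< |d|$ so that $d' \neq 0$. Then
\[
|(q + d') - b| \le |q - a| + |d' - d| < \varepsilon ,
\]
so $q + d' \in A$, and $q + d'$ is rational. Moreover $f(q + d') = f(q) + f(d') = f(q)$ while $q + d' \neq q$. This contradicts injectivity on the rational points of $A$, so $f$ is injective on $A$.

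The only step needing care is the density of rational points in $\ker f$. This is exactly where the rationality hypothesis on $\ker f$ is used: for an irrational line, $\ker f \cap \Q^n = \{0\}$ and the argument would fail. When the claim is applied in the paper, $A$ is open only in $\aff(\ccone(\B_S))$, which is a rational subspace since the cone has rational vertices. The same argument then runs verbatim inside that ambient space, using $\ker f \cap \aff(\ccone(\B_S))$, which is again rational because it is the intersection of two rational subspaces.
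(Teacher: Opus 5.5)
Your proof is correct, but it is organized differently from the paper's.

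\textbf{The paper's route.} The paper never starts from a hypothetical pair $a \neq b$. It assumes $\ker f \neq 0$, uses rationality to get a single nonzero rational $v \in \ker f$, picks any rational $x \in A$, and notes that for small rational $t$ the points $x$ and $x + tv$ are distinct rational points of $A$ with the same image. So the paper needs only that $\ker f$ contains \emph{some} nonzero rational vector, plus openness at one point. In effect it proves something stronger: whenever $A \neq \emptyset$, the hypotheses force $\ker f = 0$, so $f$ is injective on all of $\R^n$. This stronger form is what the paper leans on later (``injectivity on an open subset of the affine span implies injectivity on the whole affine span'').

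\textbf{Your route.} You transport the given failure $f(a) = f(b)$ to nearby rational points. This uses the slightly stronger fact that $\ker f \cap \Q^n$ is dense in $\ker f$, and openness at both $a$ and $b$. In exchange, you need no case split on whether $\ker f = 0$. You also need no separate appeal to $A \neq \emptyset$, which the paper's proof tacitly uses when it picks a rational $x \in A$.

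\textbf{Your closing remark on the application.} It is slightly off. $\ccone(\B_S)$ is closed, not open, in $\aff(\ccone(\B_S))$, so the claim really applies to its relative interior. Passing back to the whole closed cone then needs the extra observation that injectivity on a nonempty relatively open subset of $\aff(\ccone(\B_S))$ forces $\ker f \cap \aff(\ccone(\B_S)) = 0$. This observation is immediate from the paper's form of the argument. The paper glosses over this point in the same way.
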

\begin{proof}
	If $\ker f = 0$ then we are done, since $f$ is already injective. Otherwise, since $\ker f$ is rational linear, there exists some nonzero rational $v \in \ker f$. Since $A$ is open, it contains some rational point $x$. Choose a rational sequence $t_n \to 0$. Then $x + t_n v \in A$ for sufficiently large $n$ since $A$ is open, and also $x + t_n v$ is rational, but $f(x+t_n v) = f(x) + t_n f(v) = f(x)$ with $x \neq x + t_n v$ (since both $t_n$ and $v$ are nonzero), contradicting injectivity on rational points in $A$. 
\end{proof}

\begin{claim}\label{claim:quotientsCommuteWithConvexCone}
	Let $W$ be a linear subspace of $\R^n$, and let $\pi \colon \R^n \to \R^n/W$ be the quotient map. Let $A \subset \R^n$ be a convex set. Then $\pi(\ccone(A)) = \ccone(\pi(A))$ and $\pi(\cone(A)) = \cone(\pi(A))$.
\end{claim}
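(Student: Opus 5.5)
We prove the two equalities by double inclusion, using only that $\pi$ is linear and surjective. Here $\ccone(A) = \R_{\ge 0}A$, and $\cone(A) = \R_{>0}A^\circ$, where $A^\circ$ is the relative interior of the convex set $A$.

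\textbf{Closed cones.} Linearity gives $\pi(ra) = r\pi(a)$. Hence
\[\pi(\R_{\ge 0}A) = \{r\pi(a) \mid r \ge 0,\ a \in A\} = \R_{\ge 0}\pi(A).\]
This is a direct computation and needs no convexity.

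\textbf{Open cones.} The same computation gives $\pi(\R_{>0}A^\circ) = \R_{>0}\pi(A^\circ)$. It therefore suffices to prove $\pi(\relint A) = \relint \pi(A)$ for convex $A$. This is the main step, and the one place convexity is used. It is the standard fact (Rockafellar, Theorem 6.6) that a linear map carries the relative interior of a convex set onto the relative interior of its image. I would prove it in two inclusions.

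For $\pi(\relint A) \subseteq \relint \pi(A)$: restrict $\pi$ to $\aff(A)$. It becomes an affine surjection onto $\aff(\pi(A))$, and affine surjections between finite-dimensional affine spaces are open maps. A relatively open neighbourhood of $a$ inside $A$ therefore maps to a relatively open neighbourhood of $\pi(a)$ inside $\pi(A)$.

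For $\relint \pi(A) \subseteq \pi(\relint A)$: let $y \in \relint \pi(A)$ and fix any $a_0 \in \relint A$, which exists because $A$ is a nonempty convex set in finite dimensions. Since $y$ is in the relative interior, the segment from $\pi(a_0)$ through $y$ can be extended slightly past $y$ inside $\pi(A)$. So there is $z = \pi(a_1)$ with $a_1 \in A$ and $y = (1-t)\pi(a_0) + t z$ for some $t \in [0,1)$. By the standard line-segment principle for convex sets, $(1-t)a_0 + t a_1 \in \relint A$. Its image under $\pi$ is $y$.

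Combining the two inclusions with the cone computations gives both equalities. The degenerate case $A = \emptyset$ is trivial, and the origin causes no issue since it is handled identically on both sides.
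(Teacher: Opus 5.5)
Your proof is correct and follows essentially the same route as the paper: the closed-cone equality comes from linearity of $\pi$, and the open-cone equality comes from pulling $\pi$ through $\bigcup_{\lambda>0}\lambda\relint(A)$ using the fact that a linear map carries the relative interior of a convex set onto the relative interior of its image. The only difference is that the paper cites this fact without proof (or alternatively Rockafellar's Theorem 6.9), whereas you also give a valid proof of it via openness of affine surjections and the line-segment principle.
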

\begin{proof}
	The first equality is a straightforward calculation:
	\begin{align*}
	\pi(\ccone(A)) &= \{\pi(\lambda a) \mid \lambda \in \R_{\ge 0}, a \in A \}
	\\&=\{\lambda \pi(a) \mid \lambda \in \R_{\ge 0}, a \in A \}
	\\&= \ccone(\pi(A))
	\end{align*}

	The second equality could either be proved by showing cones and relative interior can be interchanged (see Theorem 6.9 of \cite{rockafellarConvexAnalysis2015}), or can be proved similarly to the prior calculation by recalling affine maps preserve relative interior:
	\begin{align*}
	\pi(\cone(A)) &= \pi \left( \textstyle{\bigcup_{\lambda > 0}} \lambda \relint(A) \right)
	\\&=  \textstyle{\bigcup_{\lambda > 0}}  \pi \left(\lambda \relint(A) \right)
	\\&=  \textstyle{\bigcup_{\lambda > 0}}  \lambda \pi \left( \relint(A) \right)
	\\&=  \textstyle{\bigcup_{\lambda > 0}}  \lambda \relint \left( \pi(A) \right)
	\\&=\cone(\pi(A))
	\end{align*}
\end{proof}

\pagebreak
\bibliography{references}
\end{document}

\typeout{get arXiv to do 4 passes: Label(s) may have changed. Rerun}